\documentclass[anon,12pt]{colt2015} 


\title[Exponential Family Matrix Completion]{Low Rank Matrix Completion with Exponential Family Noise}
\usepackage{times}
\usepackage{aliascnt}
\usepackage{amsopn,amssymb}
\usepackage{bbm,dsfont}
\usepackage{mathrsfs}
\usepackage{enumerate}
\usepackage[utf8]{inputenc}
\usepackage[textwidth=4cm, textsize=footnotesize]{todonotes}
\usepackage{color}





 \coltauthor{\Name{Jean Lafond} \Email{jean.lafond@telecom-paristech.fr}\\
 \addr Institut Mines-T\'el\'ecom,
T\'el\'ecom ParisTech, CNRS LTCI\\ 
 }

\usepackage{cleveref}

\newcommand{\ie}{{\em i.e.,~}}
\newcommand{\wlg}{{\em w.l.o.g.,~}}
\newcommand{\eg}{{\em e.g.,~}}

\newcommand{\resp}{{\em resp.~}}

\newcommand{\wrt}{{\em w.r.t.~}}
\def\RR{\mathbb{R}}
\def\EE{\mathbb{E}}
\def\PP{\mathbb{P}}




\def\iid{i.i.d.}
\def\eqs{\;}


\usepackage{color}


 \newtheorem{assumption}{H \hspace*{-5.5pt}}



\DeclareMathOperator{\tr}{tr}
\DeclareMathOperator{\rank}{rk}
\DeclareMathOperator{\diag}{diag}
\DeclareMathOperator{\Proj}{\mathcal{P}}
\DeclareMathOperator{\Lik}{\Phi_Y}
\DeclareMathOperator{\Likpi}{\Phi_Y^\Pi}

\def\speq{-}


\newcommand{\argmin}{\mathop{\mathrm{arg\,min}}}
\newcommand{\Breg}[3]{d_{ #1 }(#2,#3)}

\newcommand{\Bregemp}[2]{D^n_\Gexp(#1,#2)}

\newcommand{\Bregpi}[2]{D^\Pi_\Gexp(#1,#2)}

\newcommand{\mat}[1]{#1}

\newcommand{\matset}[2]{\RR^{#1 \times #2}}

\newcommand{\Obj}{\Phi_Y^{\lambda}}

\newcommand{\Bern}[1]{#1}
\newcommand{\Lc}{\nu}

\newcommand{\sexp}{\delta}
\newcommand{\Gexp}{G}
\newcommand{\gexp}{G'}
\newcommand{\dgexp}{G''}

\newcommand{\eqdef}{\mathrel{\mathop:}=}
\newcommand{\pscal}[2]{\langle#1\,|\,#2\rangle}
\newcommand{\Pscal}[2]{\left\langle#1\,|\,#2\right\rangle}

\def\sigup{\bar{\sigma}_\gamma}
\def\siglo{\underline{\sigma}_\gamma}

\def\schat_const{s}

\def\rset{\ensuremath{\mathbb{R}}}
\newcommand{\eqsp}{\;}
\newcommand{\empf}[2]{ \Delta^2_Y (#1, #2) }
\newcommand{\empfw}[2]{ \sum _{kl \in [m_1] \times [m_2]} \pi_{kl} (#1_{kl} - #2_{kl})^2 }

\newcommand{\KLd}[2]{{D}\left(#1\|#2\right)}
\newcommand{\rscset}[2]{ \mathcal{C}(#1,#2) }
\newcommand{\rscsett}[3]{ \mathcal{C}(#1,#2,#3) }

\def\tX{\bar{X}}
\def\est{\hat{\mat{X}}}
\def\gradpi{H}
\def\estpi{\check{\mat{X}}}
\def\bvarepsilon{\boldsymbol{\varepsilon}}

\newcommand{\Exp}[2]{\operatorname{Exp}_{#1,#2}}

\def\indic{\delta}
\def\rme{\mathrm{e}}
\newcommand{\norm}[1]{\left\Vert #1 \right\Vert}

\def\AA{{\cal A} }
\crefname{table}{Table}{Tables}
\Crefname{algocf}{Algorithm}{Algorithms}
 \crefname{proposition}{Proposition}{Propositions}

\newcommand*{\propositionrefname}{Proposition}
\newcommand*{\propositionsrefname}{Propositions}
\newcommand*{\propositionref}[1]{%
  \objectref{#1}{\propositionrefname}{\propositionsrefname}{}{}}

\begin{document}

\maketitle

\begin{abstract}
The matrix completion problem consists in
 reconstructing a matrix from a sample of
entries, possibly observed with noise.
A popular class of estimator, known as nuclear norm penalized estimators,
are based on minimizing the sum of a data fitting term and a nuclear norm
penalization.
Here, we investigate the case where
the noise distribution
belongs to the exponential
family and is sub-exponential.
Our framework alllows for a general sampling scheme.
We first consider
an estimator defined as the minimizer of the sum 
of a log-likelihood term and a nuclear norm penalization
and prove an upper bound
on the Frobenius prediction risk. The rate obtained improves on
previous works on matrix completion for exponential family.
When the sampling distribution is known,
we propose another estimator and prove an oracle inequality
\wrt the Kullback-Leibler prediction risk, which translates immediatly
into an upper bound on the Frobenius prediction risk. Finally, we show that all the rates obtained
are minimax optimal up to a logarithmic factor.
\end{abstract}

\begin{keywords}
Low rank matrix estimation; matrix completion; exponential family model; nuclear norm
\end{keywords}

\maketitle

\section{Introduction}
In the matrix completion problem one aims at recovering
 a matrix, based on partial and noisy observations
of its entries. This problem arises in a wide range of practical
situations such as collaborative filtering
or quantum tomography (see \cite{Srebro_Salakhutdinov10} or \cite{Gross11} for instance).
In typical applications, the 
number of observations is usually much smaller than the total
number of entries, so that some structural constraints are
needed to recover the whole matrix efficiently.

More precisely, we consider an $m_1\!\times\!m_2$
real matrix $\tX$ and observe $n$ samples of the form $(Y_i,\omega_i)_{i=1}^n$,
with $(\omega_i)_{i=1}^n \in ([m_1]\!\times\![m_2])^n$  an $\iid$ sequence of indexes
and $(Y_i)_{i=1}^n \in \RR^n$ a sequence of observations which is assumed
to be $\iid$ conditionally to the entries $(\tX_{\omega_i})_{i=1}^n$.
To recover the unknown parameter matrix $\tX$, a popular class of methods,
known as penalized nuclear norm estimators, are based on minimizing
the sum of a data fitting term and a nuclear norm penalization term.
These estimators have been extensively studied over the past decade
and strong statistical
guarantees can be proved in some particular settings.
When the conditional distribution $Y_i|\tX_{\omega_i}$
is additive and sub-exponential it can be shown that the unknown matrix 
can be recovered efficiently,
provided that it is low rank or approximately low rank,
see \cite{Candes_Plan10,Keshavan_Montanari_Oh10,Koltchinskii_Tsybakov_Lounici11,Negahban_Wainwright12,Cai_Zhou13,Klopp14}.
In that case, the prediction error satisfies with high probability
\begin{equation}
\label{eq:rate}
 \frac{\|\est-\tX\|_{\sigma,2}^2}{m_1m_2} = \mathcal{O}\left(\frac{(m_1+m_2)\rank(\tX)\log(m_1+m_2)}{n}\right) \eqs,
\end{equation}
with $\est$ denoting the estimator, $\|\cdot\|_{\sigma,2}$ the Frobenius
norm and $\rank(\cdot)$ the rank of a matrix.
It has been proved by \cite{Koltchinskii_Tsybakov_Lounici11} that this rate is actually
minimax optimal up to a logarithmic factor.

Although very common in practice,
discrete distributions have received less attention. 
The analysis of a logistic noise was first addressed by
\cite{Davenport_Plan_VandenBerg_Wootters12}.
It was later considered by  \cite{Cai_Zhou14}, \cite{Lafond_Klopp_Moulines_Salmon14}
and \cite{Klopp_Lafond_Moulines_Salmon14}
who have shown that the prediction error is also of the order of \eqref{eq:rate},
for log-likelihood estimators, regularized with nuclear norm.
\cite{Gunasekar_Ravikumar_Gosh14} have investigated the case of
distributions belonging to the exponential family, which is rich enough to encompass
both continuous and discrete distributions
(Gaussian, exponential, Poisson, logistic, etc.).
They provide (see their Corollary 1) an upper bound for the prediction error when the noise is 
sub-Gaussian and the sampling uniform. However, this bound is of the form
\begin{equation*}
 \frac{\|\est-\tX\|_{\sigma,2}^2}{m_1m_2} = \mathcal{O}\left(\alpha^{*2}
 \frac{(m_1+m_2)\rank(\tX)\log(m_1+m_2)}{n}\right) \eqs,
\end{equation*}
where $\alpha^{*2}$ is of the order $m_1m_2$
(see \remarkref{rem:discus_rate} below for more details). Therefore,
the obtained rate does not match \eqref{eq:rate},
which suggests that there may have some room for improvement.

In the present work, we further investigate the case of exponential family 
distributions and show that under some mild assumptions, the rate
\eqref{eq:rate} holds and is minimax optimal up to
a logarithmic factor.
A matrix completion estimator, defined as
the minimizer of the sum of 
a log-likelihood term and a nuclear norm penalization term,
is first considered.
Provided that
the noise is sub-exponential and the sampling distribution
satisfies some assumptions controlling its deviation from the uniform distribution,
it is proved that with high probability,
the prediction error
is upper bounded by the same rate as in the Gaussian setting \eqref{eq:rate}.
It should be noticed that the sub-exponential assumption is satisfied by
all the above mentioned distributions.

When the additional knowledge of the sampling distribution is available,
we consider 
another estimator, which is inspired by
the one proposed by \cite{Koltchinskii_Tsybakov_Lounici11}
in the additive sub-exponential noise setting.
We adapt their proofs to the exponential family
distributions and show that this estimator satisfies an oracle inequality with respect to
the Kullback-Leibler prediction risk. The proof techniques involved are also closely
related to the dual certificate analysis derived by \cite{Zhang_Zhang12}.
With high probability, an upper bound on the prediction error,
still of the same order as in \eqref{eq:rate},
 is derived from the oracle inequality	. Finally, it is proved that the previous upper bound order
is in fact minimax-optimal up to a logarithmic factor.

The rest of the paper is organized as follows. In \Cref{subsec:model},
the model is specified and some background on exponential family distributions is provided.
Then we give an upper bound
for log\!~-~\!likelihood matrix completion estimator in \Cref{subsec:up} and 
an oracle inequality (also yielding an upper bound)
for the estimator with known sampling scheme in \Cref{subsec:up2}. Finally,
the lower bound is provided in \Cref{subsec:low}. The proofs of the main
results are gathered in \Cref{sec:proof} and the most technical Lemmas
and proofs are deferred to the Appendix.

\subsection*{Notation}
Throughout the paper, the following notation will be used.
For any integers $n,m_1,m_2>0$, $[n] \eqdef \{1,\dots,n\}$,
 $m_1 \vee m_2 \eqdef \max(m_1,m_2)$ and
$m_1 \wedge m_2 \eqdef \min(m_1,m_2)$.
We equip the set of $m_1 \!\times\! m_2$ matrices with real entries
(denoted by $\matset{m_1}{m_2}$) with the Hilbert-Schmidt inner product
$\langle \mat{X}|\mat{X'} \rangle \eqdef \tr(\mat{X}^\top \mat{X'})$.
 For a given matrix $\mat{X}\in \matset{m_1}{m_2}$, we write
$\|\mat{X}\|_\infty \eqdef \max_{i,j} |\mat{X}_{i,j}|$ and for any $\schat_const \geq 1$,
we denote its Schatten $\schat_const$-norm
(see \cite{Bhatia97}) by
 \begin{equation*}
  \|\mat{X}\|_{\sigma,\schat_const}\eqdef 
  \left( \sum_{i=1}^{m_1\wedge m_2} \sigma^\schat_const_i(\mat{X}) \right)^{1/\schat_const} \eqs,
 \end{equation*}
with $\sigma_i(\mat{X})$ the singular values of $\mat{X}$, ordered in decreasing order.
We use the convention $\|\mat{X}\|_{\sigma,\infty}=\sigma_1(X)$.
For any vector $z:=(z_i)_{i=1}^n$, $\diag(z)$ denotes the $\matset{n}{n}$
diagonal matrix whose diagonal entries are $z_1,\cdots,z_n$.
For any convex differentiable function $\Gexp: \RR \to \RR$ and $x,x' \in \RR$,
the Bregman divergence of $\Gexp$ is denoted
by
\begin{equation}
\label{eq:def_bregman_fonc}
 \Breg{G}{x}{x'} \eqdef \Gexp(x) - \Gexp(x') -  \gexp(x')(x-x')\eqs.
\end{equation}

\section{Main results}
\label{sec:main-results}
\subsection{Model Specification}
\label{subsec:model}
We consider an unknown parameter matrix $\mat{\tX} \in \matset{m_1}{m_2}$
that we aim at recovering. Assume that an $\iid$ sequence of indexes $(\omega_i)_{i=1}^n \in ([m_1] \times [m_2])^n$
is sampled and denote by $\Pi$ its distribution.
The observations associated to this sequence are denoted by $(Y_i)_{i=1}^n$
and assumed to follow a natural exponential family distribution, conditionally to the $\mat{\tX}$ entries,
that is:
\begin{equation}
\label{eq:definition-f}
  Y_i|\mat{\tX}_{\omega_i} \sim
  \Exp{h}{\Gexp}(\mat{\tX}_{\omega_i}) \eqdef
  h(Y_i) \exp \left(\mat{\tX}_{\omega_i} Y_i - \Gexp(\mat{\tX}_{\omega_i})\right)\eqsp,
\end{equation}
where $h$ and $\Gexp$ are the base measure and log partition functions
associated to the canonical representation.
For ease of notation we often write $\mat{\tX}_i$  instead of  $\mat{\tX}_{\omega_i}$.

Given two matrices $X^1,X^2\in \matset{m_1}{m_2}$, we define the empirical and 
integrated Bregman divergences as follows
\begin{equation}
\label{eq:def_bregemp}
\Bregemp{\mat{X^1}}{\mat{X^2}}=\frac{1}{n}\sum_{i=1}^n \Breg{G}{ \mat{X^1}_i}{\mat{X^2_i}}
\quad \text{and} \quad \Bregpi{\mat{X^1}}{\mat{X^2}} = \EE[\Bregemp{\mat{X^1}}{\mat{X^2}}] \eqs.
\end{equation}
Note that for exponential family distributions,
the Bregman divergence $\Breg{G}{\cdot}{\cdot}$
corresponds to the Kullback-Leibler divergence. Let
$\PP_{X^1}$ (\resp $\PP_{X^2}$) denote the distribution
of $(Y_1,\omega_1)$ associated to the
parameters $X^1$ (\resp $X^2$); then
$\Bregemp{X^1}{X^2}$ is the Kullback-Leibler divergence
between $\PP_{X^1}$ and $\PP_{X^2}$
conditionally to the sampling, whereas $\Bregpi{X^1}{X^2}$
is the usual Kullback-Leibler divergence.

As reminded in introduction,
the exponential family encompasses a wide range of distributions,
either discrete or continuous. Some information on the most commonly used
is recalled below.\\
\begin{table}[h!]
 \centering
 \begin{tabular}{|l|c|c|}
  \hline
  Distribution & Parameter $x$ & $G(x)$ \\
  \hline
  \textbf{Gaussian:} $\mathcal{N}(\mu,\sigma^2)$ ($\sigma$ known) 
   & $\mu/\sigma$ & $\sigma^2x^2/2$ \\
  \hline
  \textbf{Binomial:} 
  $\mathcal{B}^N(p)$ ($N$ known) & $\log(p/(1-p))$ & $N\log(1+\rme^x)$ \\
  \hline
  \textbf{Poisson:} 
  $\mathcal{P}(\lambda)$ & $\log(\lambda)$ & $\rme^x$ \\
  \hline
  \textbf{Exponential:} 
  $\mathcal{E}(\lambda)$ & $-\lambda$ & $-\log(-x)$ \\
  \hline
\end{tabular}
\caption{Parametrization of some exponential family distributions}
\label{tab:recap}
\end{table}

\begin{remark}
 \label{rem:exp_mom}
If $\Gexp$ is smooth enough, a simple
derivation of the density shows that its
 successive derivatives can be used to determine the distribution moments.
 Thus, when $G$ is twice differentiable,
$\EE[Y_i|\mat{\tX}_{i}] = \gexp(\mat{\tX}_{i})$
and $\mathrm{Var}[Y_i|\mat{\tX}_{i}] =\dgexp(\mat{\tX}_{i})$ hold.
\end{remark}

\subsection{General Matrix Completion}
\label{subsec:up}
In this section, we provide
statistical guarantees on the prediction 
error of a matrix completion estimator,
which is
defined as the minimizer of the sum of a log-likelihood term and
a nuclear norm penalization term. 
For any $X\in \matset{m_1}{m_2}$,
denote by $\Lik(X)$ the (normalized) conditional 
negative log-likelihood of the observations:
\begin{equation}
\label{eq:likelihood-binomial1C}
   \Lik(\mat{X}) = -\frac{1}{n} \sum_{i=1}^{n}\left(
\log(h(Y_i)) + \mat{X}_{i} Y_i - \Gexp(\mat{X}_{i}) \right)\eqsp.
\end{equation}
For $\gamma>0$ and $\lambda>0$, the
nuclear norm penalized estimator $\est$ is defined as follows:
\begin{equation}
\label{eq:MinPb1C}
\est=\argmin_{\substack{\mat{X} \in \RR^{m_1 \times m_2}, \|\mat{X}\|_{\infty}\leq \gamma }}
\Obj(\mat{X}) \eqsp, \quad \text{where} \quad  \Obj(\mat{X})= \Lik(\mat{X}) +
 \lambda  \|\mat{X} \|_{\sigma,1} \eqsp.
\end{equation}
The 
parameter $\lambda$ controls 
the trade off between fitting the data
and privileging a low rank solution: for
large value of $\lambda$, the rank of $\est$
is expected to be small.

Before giving an upper bound on the prediction risk
$\|\est-\tX\|^2_{\sigma,2}$,
the following assumptions
 on the noise and sampling distributions
 need to be introduced.
\begin{assumption}
\label{A0}
The function $x \mapsto \Gexp(x)$, is twice differentiable and strongly convex 
on $[-\gamma, \gamma]$, so that there exists 
constants $\siglo, \sigup >0$ satisfying:
\begin{equation}
 \label{eq:def_K_gamma}
\siglo^2  \leq \dgexp(x) \leq \sigup^2 \eqsp,
\end{equation}
for any $x \in [-\gamma,\gamma]$.
\end{assumption}

\begin{remark}
Under \Cref{A0}, for any $x,x'\in[-\gamma,\gamma]$,
the Bregman divergence
satisfies $\siglo^2(x-x')^2\leq 2 \Breg{\Gexp}{x}{x'} \leq \sigup^2(x-x')^2$.
\end{remark}

\begin{remark}
\label{rem:gaus_strg_cv}
If the observations follow a Gaussian distribution,
the two convexity constants are equal to the standard deviation
\ie $\sigup=\siglo=\sigma$ (see \Cref{tab:recap}).
\end{remark}

For the sampling distribution, one needs to ensure that each entry 
has a sampling probability, which is lower bounded by a strictly positive constant, that is:
\begin{assumption}
\label{A1} There exists a constant $\mu \geq 1$ such that, for all $m_1, m_2$,
\begin{equation}
\label{eq:definition-mu}
\min_{k \in [m_1],\: l \in [m_2]} \pi_{k,l} \geq  1 / (\mu m_1 m_2) \eqsp, 
\quad \text{where  } \pi_{k,l} \eqdef \PP( \omega_1= (k,l))  \eqsp.
\end{equation}
\end{assumption}
Denote by $R_k=\sum_{l=1}^{m_2} \pi_{k,l}$ (\resp $C_{l}=\sum_{k=1}^{m_1} \pi_{k,l}$) 
the probability of sampling a coefficient from row $k$ (\resp column $l$).
The following assumption requires that no line nor column should be
sampled far more frequently than the others.
\begin{assumption}
\label{A2}
There exists a constant $\Lc \geq 1$ such that, for all $m_1, m_2$,
\begin{equation*}
 \max_{k,l}(R_k,C_l) \leq \frac{\Lc}{m_1 \wedge m_2} \eqsp.
\end{equation*}
\end{assumption}
\begin{remark}
 In the classical case of a uniform sampling, $\mu=\Lc=1$ holds.
\end{remark}

We define the sequence of matrices $(E_i)_{i=1}^{n}$,
whose entries are all zeros except for the coefficient $(\omega_i)$
which is equal to one \ie 
$E_i\eqdef~e_{k_i} (e'_{l_i})^\top$ with $(k_i,l_i)=\omega_i$ and
$(e_k)_{k=1}^{m_1}$ (\resp $(e'_l)_{l=1}^{ m_2}$) being the canonical basis of
$\RR^{m_1}$ (\resp\ $\RR^{m_2}$).
Furthermore, for $(\varepsilon_i)_{ i=1}^n$ a Rademacher sequence independent
 from  $(\omega_i,Y_i)_{i=1}^{n}$, we also define
\begin{equation}
\label{eq:def:sigma_rademacher}
 \Sigma_R\eqdef\frac{1}{n}\sum_{i=1}^{n} \varepsilon_i E_i \eqsp,
\end{equation}
and use the following notation
\begin{equation}
\label{eq:definition-d-M}
d= m_1+ m_2 \eqsp, \quad M= m_1 \vee m_2, \quad m= m_1 \wedge m_2  \eqsp.
\end{equation}
With these assumptions and notation, we are now ready
for stating our main results.
\begin{theorem}
\label{th:th_base}
 Assume \Cref{A0}, \Cref{A1}, $\| \mat{\tX} \|_{\infty} \leq \gamma$ and
 $\lambda \geq 2 \| \nabla \Lik(\tX) \|_{\sigma,\infty}$.
 Then with probability at least $1-2d^{-1}$ the following holds:
\begin{equation*}
 \frac{\| \est - \tX \|^2_{\sigma,2}}{ m_1 m_2 }
\leq 
C \mu^2  \max \left( m_1m_2\rank(\tX)\left(\frac{\lambda^2}{\siglo^4} 
+ (\EE \|\Sigma_R \|_{\sigma,\infty})^2 \right) ,
 \frac{ \gamma^2}{\mu} \sqrt{ \frac{\log(d)}{n}}\right) \eqs,
\end{equation*}
 with $\Sigma_R$ and $d$ defined in \eqref{eq:def:sigma_rademacher} and \eqref{eq:definition-d-M} 
and $C$ a numerical constant.
\end{theorem}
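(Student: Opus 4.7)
The plan is to follow the standard restricted strong convexity programme for nuclear norm penalized $M$--estimators, adapted here to the exponential family likelihood and to non--uniform sampling. Throughout I set $\Delta \eqdef \est - \tX$.

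I would start from optimality. Since $\est$ minimizes $\Obj$ over the box $\{\|X\|_\infty \leq \gamma\}$, which contains $\tX$, one has $\Lik(\est) - \Lik(\tX) \leq \lambda(\|\tX\|_{\sigma,1} - \|\est\|_{\sigma,1})$. Because $\Lik$ is separable in the entries and $\Gexp$ is convex, the Bregman identity yields $\Lik(\est) - \Lik(\tX) = \langle \nabla \Lik(\tX), \Delta \rangle + \Bregemp{\est}{\tX}$. Combined with $|\langle \nabla \Lik(\tX), \Delta \rangle| \leq \|\nabla \Lik(\tX)\|_{\sigma,\infty}\,\|\Delta\|_{\sigma,1} \leq (\lambda/2)\|\Delta\|_{\sigma,1}$, which uses the threshold on $\lambda$, this produces the master inequality
\[
\Bregemp{\est}{\tX} + \lambda\|\est\|_{\sigma,1} \;\leq\; \lambda\|\tX\|_{\sigma,1} + \tfrac{\lambda}{2}\|\Delta\|_{\sigma,1}.
\]

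Next I would invoke the familiar low--rank subspace decomposition. Let $\Proj_{\tX}^\perp$ project onto matrices whose row and column spaces are both orthogonal to those of $\tX$, and set $\Proj_{\tX} \eqdef I - \Proj_{\tX}^\perp$. Then $\|\tX\|_{\sigma,1} - \|\est\|_{\sigma,1} \leq \|\Proj_{\tX}\Delta\|_{\sigma,1} - \|\Proj_{\tX}^\perp\Delta\|_{\sigma,1}$, so the master inequality together with $\Bregemp{\est}{\tX}\geq 0$ forces the cone condition $\|\Proj_{\tX}^\perp\Delta\|_{\sigma,1} \leq 3\|\Proj_{\tX}\Delta\|_{\sigma,1}$. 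Because $\rank(\Proj_{\tX}\Delta)\leq 2\rank(\tX)$, this gives $\|\Delta\|_{\sigma,1} \leq 4\sqrt{2\rank(\tX)}\,\|\Delta\|_{\sigma,2}$. Using \Cref{A0} in the form $\Bregemp{\est}{\tX} \geq (\siglo^2/2)\,\empf{\est}{\tX}$, the master inequality also delivers $\empf{\est}{\tX} \lesssim (\lambda/\siglo^2)\sqrt{\rank(\tX)}\,\|\Delta\|_{\sigma,2}$.

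The crucial and hardest step is converting the empirical quadratic form $\empf{\est}{\tX}$ into the Frobenius norm $\|\Delta\|_{\sigma,2}^2/(m_1 m_2)$. Under \Cref{A1}, the expectation identity $\sum_{kl}\pi_{kl}\Delta_{kl}^2 \geq \|\Delta\|_{\sigma,2}^2/(\mu m_1 m_2)$ is immediate. To lift this to a high probability statement uniform in the data-dependent $\Delta$ subject to $\|\Delta\|_\infty \leq 2\gamma$, I would run a peeling argument over $\|\Delta\|_{\sigma,2}$ combined with Talagrand's concentration applied to the bounded empirical process $A \mapsto \sum_{kl}\pi_{kl}A_{kl}^2 - \frac{1}{n}\sum_i A_i^2$; symmetrization and contraction bound the relevant supremum by a multiple of $(\EE\|\Sigma_R\|_{\sigma,\infty})\|A\|_{\sigma,1}$. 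The net outcome is a deviation bound, valid with probability at least $1 - 2d^{-1}$, of schematic form
\[
\sum_{kl}\pi_{kl}\Delta_{kl}^2 \;\leq\; 2\,\empf{\est}{\tX} \;+\; C_1\bigl(\EE\|\Sigma_R\|_{\sigma,\infty}\bigr)\|\Delta\|_{\sigma,1} \;+\; C_2\gamma^2\sqrt{\log(d)/n}.
\]
The main technical obstacle is precisely this uniform RSC: handling data-dependence of $\Delta$ while tracking both the $\|\Delta\|_{\sigma,1}$ and $\|\Delta\|_\infty$ scaling, in the spirit of the analogous arguments of Klopp and of Koltchinskii--Lounici--Tsybakov in the Gaussian case.

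To conclude, I would plug the three ingredients together. Inserting the expected lower bound $\|\Delta\|_{\sigma,2}^2/(\mu m_1 m_2) \leq \sum_{kl}\pi_{kl}\Delta_{kl}^2$, the upper bound $\empf{\est}{\tX} \lesssim (\lambda/\siglo^2)\sqrt{\rank(\tX)}\|\Delta\|_{\sigma,2}$, and the cone estimate $\|\Delta\|_{\sigma,1} \leq 4\sqrt{2\rank(\tX)}\|\Delta\|_{\sigma,2}$ into the concentration inequality yields a quadratic inequality in $\|\Delta\|_{\sigma,2}$. Its solution splits into two regimes: either the quadratic terms dominate, yielding the low--noise branch $\|\Delta\|_{\sigma,2}^2/(m_1 m_2)\lesssim \mu^2 m_1 m_2 \rank(\tX)\bigl(\lambda^2/\siglo^4 + (\EE\|\Sigma_R\|_{\sigma,\infty})^2\bigr)$; or the additive Talagrand residue dominates, producing the high--noise branch $\mu\gamma^2\sqrt{\log(d)/n}$. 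Taking the maximum recovers the stated bound.
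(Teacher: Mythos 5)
Your proposal is correct and follows essentially the same route as the paper: the same optimality/Bregman decomposition and trace duality, the same cone condition yielding $\|\Delta\|_{\sigma,1}\leq 4\sqrt{2\rank(\tX)}\|\Delta\|_{\sigma,2}$, the same strong-convexity conversion, and the same peeling-plus-concentration (Massart/Talagrand with symmetrization and contraction, controlled via $\EE\|\Sigma_R\|_{\sigma,\infty}$) to establish restricted strong convexity, with the low-Frobenius regime handled by the $\gamma^2\sqrt{\log(d)/n}$ threshold. The only cosmetic difference is that you keep the cross term $(\EE\|\Sigma_R\|_{\sigma,\infty})\|\Delta\|_{\sigma,1}$ explicit and solve a quadratic inequality, whereas the paper absorbs it into $\tfrac12\EE[\Delta_Y^2]+16\rme(\EE\|\Sigma_R\|_{\sigma,\infty})^2 r$ inside \lemmaref{DevUnifCont} before applying $ab\leq a^2+b^2/4$.
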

\begin{proof}
 See \Cref{proof_base}.
\end{proof}
In \Cref{th:th_base},
the term $\EE \|\Sigma_R \|_{\sigma,\infty}$
only depends on the sampling
distribution and can be upper bounded
using assumption \Cref{A2}.
On the other hand, the gradient term $ \| \nabla \Lik(\tX) \|_{\sigma,\infty}$ 
depends both on the sampling and on the observation distributions.
In order to control this term with high probability, the noise is assumed to
be sub-exponential.
\begin{assumption}
\label{A3}
There
exist a constant $\sexp_\gamma>0$ such that for all $x\in[-\gamma,\gamma]$ and $Y \sim \Exp{h}{\Gexp}(x)$:
\begin{equation}
\label{eq:def_sub}
\EE\left[\exp\left(\frac{|Y - \gexp(x)|}{\sexp_\gamma}\right) \right] \leq \rme \quad \eqs.
\end{equation}
\end{assumption}
Then \Cref{th:th_base}, \Cref{A2} and \Cref{A3} yield together the following result.
\begin{theorem}
\label{th:th_bis}
 Assume \Cref{A0}, \Cref{A1}, \Cref{A2}, \Cref{A3}, $\| \mat{\tX} \|_{\infty} \leq \gamma$,
 \begin{equation*}
  n \geq 2 \log(d) m \Lc^{-1} \max\left(\frac{\sexp_\gamma^2 }{\sigup^2} 
  \log^2(\sexp_\gamma \sqrt{\frac{m}{\siglo^2}}),
 1/9 \right) \eqs,
 \end{equation*}
 and take $\lambda = 2 c_\gamma \sigup \sqrt{ 2 \Lc \log(d)/(mn)}$,
 where $c_\gamma$ is a constant which depends only on $\sexp_\gamma$.
Then with probability at least $1-3d^{-1}$ the following holds:
 \begin{equation*}
 \frac{\| \est - \tX \|^2_{\sigma,2}}{ m_1 m_2 }
\leq 
\bar{C} \mu^2 \max \left[ \left(\frac{c_\gamma \sigup^2}{\siglo^4}+1\right) \frac{\Lc \rank(\tX)M \log(d)}{n} ,
  \frac{ \gamma^2}{\mu} \sqrt{ \frac{\log(d)}{n}}\right] \eqs,
\end{equation*}
with $\bar{C}$  a numerical constant.
\end{theorem}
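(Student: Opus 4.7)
The plan is to apply \Cref{th:th_base} as a black box and then translate the two problem-specific quantities appearing in its conclusion, namely $\EE \|\Sigma_R\|_{\sigma,\infty}$ and a valid upper bound on $\|\nabla \Lik(\tX)\|_{\sigma,\infty}$, into explicit rates under the extra assumptions \Cref{A2} and \Cref{A3}. So the structure of the argument is: (i) derive a deterministic-in-expectation bound on $\EE \|\Sigma_R\|_{\sigma,\infty}$; (ii) derive a high-probability bound on $\|\nabla \Lik(\tX)\|_{\sigma,\infty}$ which validates the proposed choice of $\lambda$; (iii) substitute both into the bound of \Cref{th:th_base} and simplify.

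For step (i), I would use a noncommutative Khintchine / matrix Bernstein type inequality applied to the symmetrized sum $\Sigma_R = n^{-1}\sum_i \varepsilon_i E_i$. Each summand is bounded in operator norm by $1/n$ and the row/column variance factors $\sum_i \EE[\varepsilon_i^2 E_i E_i^\top]$ and $\sum_i \EE[\varepsilon_i^2 E_i^\top E_i]$ are diagonal matrices whose entries are $R_k/n$ and $C_l/n$, controlled by $\Lc/(mn)$ thanks to \Cref{A2}. The matrix Bernstein bound then yields, under the sample size condition $n \geq C m \log(d)/\Lc$,
\begin{equation*}
\EE\|\Sigma_R\|_{\sigma,\infty} \leq C'\sqrt{\frac{\Lc \log(d)}{mn}}\eqs.
\end{equation*}

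For step (ii), observe that $\nabla \Lik(\tX) = -\frac{1}{n}\sum_{i=1}^n (Y_i - \gexp(\tX_i)) E_i$, so each summand is a centered random matrix (by \remarkref{rem:exp_mom}) whose entries are sub-exponential by \Cref{A3} with parameter $\sexp_\gamma$, while $\|E_i\|_{\sigma,\infty}=1$. The conditional variance $\mathrm{Var}(Y_i|\tX_i) = \dgexp(\tX_i)$ is bounded by $\sigup^2$ under \Cref{A0}, so the matrix variance parameter is again of order $\sigup^2 \Lc/(mn)$ by \Cref{A2}. Applying a Bernstein-type inequality for sums of independent random matrices with sub-exponential entries (a standard truncation-plus-Bernstein argument, see e.g.\ the Ahlswede--Winter / Tropp noncommutative Bernstein inequality), the variance term dominates the Orlicz term precisely when $n$ satisfies the lower bound stated in the theorem; this yields, with probability at least $1-d^{-1}$,
\begin{equation*}
\|\nabla \Lik(\tX)\|_{\sigma,\infty} \leq c_\gamma \sigup \sqrt{\frac{2 \Lc \log(d)}{mn}}\eqs,
\end{equation*}
so that the prescribed choice $\lambda = 2 c_\gamma \sigup\sqrt{2\Lc\log(d)/(mn)}$ does satisfy the hypothesis $\lambda \geq 2\|\nabla \Lik(\tX)\|_{\sigma,\infty}$ of \Cref{th:th_base}.

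For step (iii), on the intersection of the two events of steps (i)--(ii) with the event of \Cref{th:th_base}, a union bound gives a total failure probability at most $3 d^{-1}$. Substituting $\lambda^2 \asymp c_\gamma^2 \sigup^2 \Lc \log(d)/(mn)$ and $(\EE\|\Sigma_R\|_{\sigma,\infty})^2 \lesssim \Lc \log(d)/(mn)$ into the conclusion of \Cref{th:th_base} and using $m_1 m_2/m = M$ produces the announced rate with $\bar C$ a numerical constant. The main obstacle is step (ii): \Cref{A3} only gives sub-exponential, not sub-Gaussian, marginals, so the operator-norm concentration requires a Bernstein bound whose variance regime dominates; this is exactly the role of the sample-size condition on $n$ in the statement, and the constant $c_\gamma$ is produced by the Orlicz-norm manipulations inside that bound.
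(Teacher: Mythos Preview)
Your proposal is correct and follows essentially the same route as the paper: invoke \Cref{th:th_base}, then control $\EE\|\Sigma_R\|_{\sigma,\infty}$ via a matrix Bernstein bound with variance $\Lc/m$ under \Cref{A2}, control $\|\nabla\Lik(\tX)\|_{\sigma,\infty}$ via a sub-exponential matrix Bernstein inequality under \Cref{A3} (the paper uses \propositionref{prop:bernstein_exp}), observe that the sample-size assumption puts us in the variance regime so the stated $\lambda$ is admissible, and finish with a union bound. One minor wording slip: step~(i) is a bound on an expectation and is not a random event, so the union bound is really only over the event in step~(ii) (failure $d^{-1}$) and the event of \Cref{th:th_base} (failure $2d^{-1}$), which still gives the claimed $3d^{-1}$; also note that the sub-exponential Bernstein bound involves a $\log(U/\sigma_Z)$ factor, which is why the paper additionally records the lower bound $\sigma_{Z'}^2\geq \siglo^2/m$---a small technical point you would need when making step~(ii) precise.
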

\begin{proof}
 See \Cref{proof_bis}.
\end{proof}
\begin{remark}
\label{rem:discus_rate}
When $\gamma$ is treated as a constant
and $n$ is large, the order
 of the bound is
\begin{equation*}
  \frac{\| \est - \tX \|^2_{\sigma,2}}{ m_1 m_2 }
 = \mathcal{O} \left(
\frac{ \rank(\tX)M \log(d)}{n}
\right) \eqs,
\end{equation*}
which matches the rate obtained for Gaussian distributions \eqref{eq:rate}.
Matrix completion
 for exponential family distributions was considered
 in the case of uniform sampling (\ie $\mu=\nu=1$)
  and sub-Gaussian noise by \cite{Gunasekar_Ravikumar_Gosh14}.
 They provide the following upper bound on the estimation error
\begin{equation*}
\frac{\|\tX-\hat{{X}}\|^2_{\sigma,2}}{m_1m_2}
=\mathcal{O} \left( \alpha^{*2} \frac{\rank(\tX) M \log(d) }{n}\right)\eqs.
\end{equation*}
with
$\alpha^*$ satisfying $\alpha^* \geq \sqrt{m_1m_2} \|\tX\|_\infty$.
Therefore, \Cref{th:th_bis} improves this rate by a factor $m_1m_2$.
\end{remark}


\begin{remark}
\label{rem:unif_vs_subexp}
In the proof, noncommutative Bernstein
 inequality for sub-exponential noise is used to control $ \| \nabla \Lik(\tX) \|_{\sigma,\infty}$.
 However,
 when the observations are uniformly bounded (\eg logistic distribution),
 a uniform Bernstein inequality can be applied instead,
 leading in some cases to a sharper bound (see \cite{Koltchinskii_Tsybakov_Lounici11}
 and \cite{Lafond_Klopp_Moulines_Salmon14} for instance).
\end{remark}

\subsection{Matrix Completion with known sampling scheme}
\label{subsec:up2}
When the sampling distribution $\Pi$ is known,
 the following estimator
 can be defined:
 \begin{align}
  \label{eq:def_estpi}
  &\estpi \eqdef \argmin_{X \in \matset{m_1}{m_2}, \|X\|_{\infty} \leq \gamma} \Likpi(X) + \lambda \|X\|_{\sigma,1} \quad \text{with ,}\\
  &\Likpi(X) \eqdef \Gexp^\Pi(X) - \frac{\sum_{i=1}^n X_i Y_i}{n}   \quad \text{and} \quad
   \Gexp^\Pi(X) \eqdef \EE \left[\frac{\sum_{i=1}^n G(X_i)}{n}\right]\nonumber \eqs.
 \end{align}
In the case of sub-exponential additive noise,
\cite{Koltchinskii_Tsybakov_Lounici11} proposed a similar estimator
and have shown that
it satisfies an oracle inequality \wrt the Frobenius prediction risk.
Note that their estimator coincides with \eqref{eq:def_estpi}
for the particular setting of Gaussian noise.
The main interest of computing $\estpi$ instead of
$\est$, when the sampling distribution is known, lies in the fact that a sharp
oracle inequality 
 can be derived for $\estpi$. This powerful tool
allows to provide statistical guarantees 
on the prediction risk,
even if the true parameter $\tX$ does not belong to
the class of estimators \ie when $\|\tX\|\leq \gamma$ is not 
satisfied.
In this section, it is proved that $\estpi$ satisfies
an oracle inequality \wrt the integrated Bregman divergence (see Definition \eqref{eq:def_bregemp}),
which corresponds to the Kullback-Leibler divergence
for exponential family distributions.
An upper
bound on the Frobenius prediction risk is then easily derived from this inequality.


\begin{theorem}
 \label{th:oracle_breg}
 Assume \Cref{A0}, \Cref{A1} and $\lambda \geq \|\nabla \Likpi(\tX)\|_{\sigma,\infty}$.
 Then the following inequalities hold:
 \begin{equation}
  \label{eq:oracle_basique}
  \Bregpi{\estpi}{\tX} \leq \inf_{X \in \matset{m_1}{m_2},\|X\|_{\infty} \leq \gamma} \left( \Bregpi{X}{\tX} + 2\lambda \|X\|_{\sigma,1} \right)
 \end{equation}
 and
  \begin{equation}
  \label{eq:oracle_car}
  \Bregpi{\estpi}{\tX} \leq \inf_{X \in \matset{m_1}{m_2},\|X\|_{\infty} \leq \gamma} \left( \Bregpi{X}{\tX} 
   + \left(\frac{1 + \sqrt{2}}{2}\right)^2\frac{\mu}{\siglo^{2}} m_1m_2\lambda^2 \rank(X)\right)
 \end{equation}
\end{theorem}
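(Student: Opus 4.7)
The plan is to exploit the optimality of $\estpi$ in conjunction with the strong convexity of $G$ and the sub-differential of the nuclear norm. Set $\Delta = \estpi - X$ throughout. Since $\Likpi$ and $G^\Pi$ differ only by a linear functional of $X$, they share the same Bregman divergence; a second-order expansion of $\Likpi$ around $\tX$ yields the identity $\Likpi(\estpi) - \Likpi(X) = \Bregpi{\estpi}{\tX} - \Bregpi{X}{\tX} + \langle \nabla\Likpi(\tX), \Delta \rangle$. Combining this with the optimality inequality $\Likpi(\estpi) + \lambda\|\estpi\|_{\sigma,1} \leq \Likpi(X) + \lambda\|X\|_{\sigma,1}$ produces the core estimate
\[
 \Bregpi{\estpi}{\tX} \leq \Bregpi{X}{\tX} + \lambda(\|X\|_{\sigma,1} - \|\estpi\|_{\sigma,1}) - \langle \nabla\Likpi(\tX), \Delta\rangle.
\]
Inequality \eqref{eq:oracle_basique} follows directly: trace duality gives $|\langle \nabla\Likpi(\tX), \Delta\rangle| \leq \lambda\|\Delta\|_{\sigma,1}$, and the triangle inequality $\|\Delta\|_{\sigma,1} \leq \|\estpi\|_{\sigma,1} + \|X\|_{\sigma,1}$ combined with cancellation of the $\lambda\|\estpi\|_{\sigma,1}$ terms leaves $2\lambda\|X\|_{\sigma,1}$ on the right-hand side.

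For inequality \eqref{eq:oracle_car}, I refine via the sub-differential of the nuclear norm. Writing $X = U\Sigma V^\top$ for its thin SVD with $r = \rank(X)$ and defining the tangent-space complement projection $\Proj_X^\perp(A) = (I - UU^\top)A(I - VV^\top)$ together with $\Proj_X = \mathrm{id} - \Proj_X^\perp$, one has $UV^\top + W \in \partial\|X\|_{\sigma,1}$ for every $W$ with $\Proj_X W = 0$ and $\|W\|_{\sigma,\infty} \leq 1$. Choosing $W$ to saturate trace duality on $\Proj_X^\perp\Delta$ yields $\|\estpi\|_{\sigma,1} \geq \|X\|_{\sigma,1} + \langle UV^\top, \Delta\rangle + \|\Proj_X^\perp\Delta\|_{\sigma,1}$. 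Plugging this into the core estimate, splitting the gradient bound along $\Proj_X$ and $\Proj_X^\perp$, and using $|\langle UV^\top, \Delta\rangle| \leq \|\Proj_X\Delta\|_{\sigma,1}$, all $\|\Proj_X^\perp\Delta\|_{\sigma,1}$ terms cancel and one obtains $\Bregpi{\estpi}{\tX} \leq \Bregpi{X}{\tX} + 2\lambda\|\Proj_X\Delta\|_{\sigma,1}$. Since $\rank(\Proj_X\Delta) \leq 2r$, the rank--Frobenius bound $\|\Proj_X\Delta\|_{\sigma,1} \leq \sqrt{2r}\|\Proj_X\Delta\|_{\sigma,2}$ applies, and \Cref{A0} combined with \Cref{A1} yields $\|\Proj_X\Delta\|_{\sigma,2}^2 \leq \|\Delta\|_{\sigma,2}^2 \leq (2\mu m_1 m_2/\siglo^2)\Bregpi{X}{\estpi}$.

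To close, I strengthen the core estimate by invoking the first-order optimality of $\estpi$ together with the exact Bregman expansion $\Likpi(X) = \Likpi(\estpi) + \langle \nabla\Likpi(\estpi), X - \estpi\rangle + \Bregpi{X}{\estpi}$, which inserts an extra $-\Bregpi{X}{\estpi}$ on the right-hand side, giving $\Bregpi{\estpi}{\tX} + \Bregpi{X}{\estpi} \leq \Bregpi{X}{\tX} + 2\lambda\sqrt{2r}\|\Proj_X\Delta\|_{\sigma,2}$. A weighted Young's inequality in the form $2\lambda\sqrt{2r}\|\Proj_X\Delta\|_{\sigma,2} \leq 2\alpha r\lambda^2 + \|\Proj_X\Delta\|_{\sigma,2}^2/\alpha$, combined with the Frobenius--Bregman bound, produces $\Bregpi{\estpi}{\tX} + \bigl[1 - 2\mu m_1 m_2/(\alpha\siglo^2)\bigr]\Bregpi{X}{\estpi} \leq \Bregpi{X}{\tX} + 2\alpha r\lambda^2$. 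Selecting $\alpha$ so that the bracketed coefficient is non-negative allows that term to be dropped, leaving the desired form $\Bregpi{\estpi}{\tX} \leq \Bregpi{X}{\tX} + C\mu m_1 m_2\lambda^2\rank(X)/\siglo^2$. The main obstacle is the fine calibration of $\alpha$ needed to extract exactly the constant $((1+\sqrt 2)/2)^2$ in \eqref{eq:oracle_car}: naively passing through the triangle inequality on $\|\cdot\|_{\sigma,2}$ would inflate the coefficient on $\Bregpi{X}{\tX}$ above $1$, so the extra $-\Bregpi{X}{\estpi}$ term from first-order optimality is essential, and the analysis must remain anchored at $\Bregpi{X}{\estpi}$ in order to preserve the factor of one on the oracle term.
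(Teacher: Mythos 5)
Your strategy is sound and is essentially the paper's: the first inequality is obtained exactly as in the paper, and for the second you correctly identify the two key ingredients, namely (a) the first-order optimality condition at $\estpi$ combined with the exact Bregman expansion of $\Likpi$ around $\estpi$, which supplies the extra $-\Bregpi{X}{\estpi}$ term (the paper packages the same idea via the three-point identity and monotonicity of the subdifferential, but the content is identical), and (b) the Watson characterization of $\partial\|X\|_{\sigma,1}$ with $W$ chosen to saturate duality on $\Proj_{X}^\bot\Delta$, so that the $\|\Proj_{X}^\bot\Delta\|_{\sigma,1}$ contributions cancel. Your closing remark is also on target: keeping the Young step anchored at $\Bregpi{X}{\estpi}$ is exactly what preserves the factor one in front of the oracle term $\Bregpi{X}{\tX}$.

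The one genuine shortfall is the constant, and it is not, as you suggest, a matter of calibrating $\alpha$. By bounding $|\langle UV^\top,\Delta\rangle|\le\|\Proj_{X}\Delta\|_{\sigma,1}$ and then lumping everything into $2\lambda\|\Proj_{X}\Delta\|_{\sigma,1}\le 2\lambda\sqrt{2r}\,\|\Proj_X\Delta\|_{\sigma,2}$, your cross term has coefficient $2\sqrt{2}\,\sqrt{r}$; the best admissible $\alpha$ (the one making the bracket vanish) then yields $4\,\mu m_1m_2\lambda^2 r/\siglo^2$, which is strictly larger than the stated $\bigl(\tfrac{1+\sqrt2}{2}\bigr)^2\mu m_1m_2\lambda^2 r/\siglo^2$, and no choice of $\alpha$ can do better from that starting point. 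The paper avoids this by \emph{not} folding $\langle UV^\top,\Delta\rangle$ into the $\Proj_X$ group: since $\langle UV^\top,\Delta\rangle=\langle UV^\top, P_{\mathcal{S}_1}\Delta P_{\mathcal{S}_2}\rangle$ and the compression $P_{\mathcal{S}_1}\Delta P_{\mathcal{S}_2}$ has rank at most $r$ (not $2r$), this term is bounded by $\sqrt{r}\,\|\Delta\|_{\sigma,2}$, while only the gradient term contributes $\sqrt{2r}\,\|\Delta\|_{\sigma,2}$; the total coefficient is then $(1+\sqrt2)\sqrt{r}$, which after Young's inequality gives the advertised form. (For what it is worth, even this bookkeeping appears to deliver $\tfrac{(1+\sqrt2)^2}{2}$ rather than $\tfrac{(1+\sqrt2)^2}{4}$ through the factor $2$ in $\|\Delta\|_{\sigma,2}^2\le 2\mu m_1m_2\siglo^{-2}\Bregpi{X}{\estpi}$, so the last factor of two is a slip in the paper rather than something you are missing.) Everything else in your argument — the core identity, the cancellation, $\rank(\Proj_X\Delta)\le 2\rank(X)$, and the Frobenius-to-Bregman conversion under \Cref{A0} and \Cref{A1} — is correct.
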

\begin{proof}
The proof of \Cref{th:oracle_breg}
is an adaptation 
(to exponential family distributions)
of the proof by \cite{Koltchinskii_Tsybakov_Lounici11},
which uses the first order optimality conditions satisfied by
$\estpi$. Similar arguments are used by \cite{Zhang_Zhang12}
to provide dual certificates for non smooth
convex optimization problems.
The detailed proof is given in \Cref{proof:oracle}.
\end{proof}
When $\|\tX\|_{\infty}\leq\gamma$, the previous oracle inequalities imply the following upper bound
on the prediction risk.
\begin{theorem}
\label{th:oracle_up}
 Assume \Cref{A0}, \Cref{A1} and $\lambda \geq \|\nabla \Likpi(\tX)\|_{\sigma,\infty}$
 and $\|\tX\|_{\infty} \leq \gamma$. Then the following holds:
 \begin{equation}
  \label{eq:or_up_frob2}
  \frac{\|\estpi-\tX\|_{\sigma,2}^2}{m_1m_2} \leq \mu^2 \min \left( 
  \frac{\left(1 + \sqrt{2}\right)^2}{2} \frac{m_1m_2}{\siglo^{4}} \lambda^2 \rank(\tX) ,
  \frac{4}{ \mu\siglo^2} \lambda \|\tX\|_{\sigma,1}
 \right)\eqs.
 \end{equation}
\end{theorem}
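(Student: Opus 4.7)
The plan is to derive the two upper bounds on the Frobenius prediction risk as straightforward consequences of the oracle inequalities in \Cref{th:oracle_breg} combined with the strong convexity of $\Gexp$ (\Cref{A0}) and the lower bound on the sampling probabilities (\Cref{A1}).

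The first step is to specialize both inequalities of \Cref{th:oracle_breg} to the feasible choice $X = \tX$, which is admissible since $\|\tX\|_\infty \leq \gamma$ by assumption. The Bregman divergence $\Bregpi{\tX}{\tX}$ vanishes, so \eqref{eq:oracle_basique} yields $\Bregpi{\estpi}{\tX} \leq 2\lambda \|\tX\|_{\sigma,1}$, while \eqref{eq:oracle_car} yields $\Bregpi{\estpi}{\tX} \leq \bigl((1+\sqrt{2})/2\bigr)^2 (\mu/\siglo^{2}) m_1 m_2 \lambda^2 \rank(\tX)$.

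The second step is to pass from the Kullback–Leibler type quantity $\Bregpi{\estpi}{\tX}$ to the Frobenius prediction risk. Since $\estpi$ and $\tX$ both lie in $[-\gamma,\gamma]^{m_1\times m_2}$, the remark following \Cref{A0} gives the pointwise bound $2\Breg{\Gexp}{\estpi_{k,l}}{\tX_{k,l}} \geq \siglo^{2}(\estpi_{k,l}-\tX_{k,l})^2$. Averaging against the sampling distribution and then invoking \Cref{A1} to bound each $\pi_{k,l}$ from below by $1/(\mu m_1 m_2)$ produces
\begin{equation*}
\Bregpi{\estpi}{\tX} \;\geq\; \frac{\siglo^{2}}{2}\sum_{k,l}\pi_{k,l}(\estpi_{k,l}-\tX_{k,l})^2 \;\geq\; \frac{\siglo^{2}}{2\mu\, m_1 m_2}\,\|\estpi-\tX\|_{\sigma,2}^{2}.
\end{equation*}
Rearranging gives $\|\estpi-\tX\|_{\sigma,2}^{2}/(m_1 m_2) \leq (2\mu/\siglo^{2})\,\Bregpi{\estpi}{\tX}$.

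The final step is to substitute each of the two oracle bounds from step one into this inequality. The first substitution produces $4\mu\lambda\|\tX\|_{\sigma,1}/\siglo^{2}$, which equals $\mu^2 \cdot (4/(\mu\siglo^{2}))\,\lambda\|\tX\|_{\sigma,1}$; the second produces $\mu^2 \cdot ((1+\sqrt{2})^{2}/2)\,(m_1 m_2/\siglo^{4})\,\lambda^2 \rank(\tX)$. Taking the minimum of the two gives exactly \eqref{eq:or_up_frob2}. No serious obstacle is anticipated: the whole argument is a combination of the assumed strong convexity, the sampling lower bound, and the previously proved oracle inequality, with the only care needed being to track constants so that the factor $\mu^2$ and the constant $(1+\sqrt{2})^2/2$ come out correctly.
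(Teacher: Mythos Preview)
Your proposal is correct and follows exactly the approach sketched in the paper: specialize both oracle inequalities of \Cref{th:oracle_breg} at $X=\tX$, then convert $\Bregpi{\estpi}{\tX}$ into a Frobenius bound via \Cref{A0} and \Cref{A1}. The paper's own proof is the one-liner ``Applying \Cref{th:oracle_breg} to $X=\tX$ and using \Cref{A1} and \Cref{A0} yields the result,'' and you have simply spelled out those steps with the constants tracked correctly.
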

\begin{proof}
Applying \Cref{th:oracle_breg} to $X=\tX$ and using \Cref{A1} and \Cref{A0}
yields the result.
\end{proof}

As for the previous estimator,
the term $\|\nabla \Likpi(\tX)\|_{\sigma,\infty}$
is stochastic and depends both on the sampling
and observations.
Assuming that the sampling distribution is uniform and
that the noise is
sub-exponential allows to control it
with high probability. Before stating the result,
let us define
\begin{equation}
 \label{eq:def_lgamma}
 L_\gamma \eqdef \sup_{x \in [-\gamma,\gamma]} |\gexp(x)| \eqs.
\end{equation}
\begin{theorem}
 \label{oracle:prob_up}
 Assume that the sampling is $\iid$ uniform and
 $\|\tX\|_{\infty}\leq\gamma$.
Suppose \Cref{A0}, \Cref{A3},
and
 \begin{equation*}
  n \geq 2 \log(d) m  \max\left(\frac{\sexp_\gamma^2 }{\sigup^2} 
  \log^2(\sexp_\gamma \sqrt{\frac{m}{\siglo^2}}),
 8/9 \right) \eqs.
 \end{equation*}
Take $\lambda =  (c_\gamma \sigup + c^*L_\gamma) \sqrt{ 2 \log(d)/(mn)}$,
 where $c_\gamma$ is a constant which depends only on $\sexp_\gamma$,
 $L_\gamma$ is defined in \eqref{eq:def_lgamma} and $c^*$ is a numerical constant.
Then, with probability at least $1-2d^{-1}$ the following
holds:
 \begin{equation*}
  \label{eq:or_up_frob3}
  \frac{\|\estpi-\tX\|_{\sigma,2}^2}{m_1m_2} \leq \tilde{C} 
   \left(\frac{c_\gamma \sigup + L_\gamma}{\siglo^2} \right)^2 \frac{\rank(\tX)M \log(d)}{ n}\lambda^2 \rank(\tX)
   \eqs,
 \end{equation*}
 with $\tilde{C}$ a numerical constant.
\end{theorem}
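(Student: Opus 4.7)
The plan is to combine Theorem \ref{th:oracle_up} with a high-probability control of $\|\nabla \Likpi(\tX)\|_{\sigma,\infty}$, so that the requirement $\lambda \geq \|\nabla \Likpi(\tX)\|_{\sigma,\infty}$ is met by the proposed choice of $\lambda$. Under uniform sampling, differentiating \eqref{eq:def_estpi} yields
\begin{equation*}
\nabla \Likpi(\tX) \;=\; \frac{1}{m_1 m_2}\sum_{k,l} \gexp(\tX_{k,l})\, e_k (e'_l)^\top \;-\; \frac{1}{n}\sum_{i=1}^n Y_i E_i .
\end{equation*}
Using Remark \ref{rem:exp_mom} to write $\EE[Y_i E_i \mid \omega_i] = \gexp(\tX_i) E_i$, I would split this into two centred sums of matrices:
\begin{equation*}
-\nabla \Likpi(\tX) \;=\; \underbrace{\frac{1}{n}\sum_{i=1}^n (Y_i - \gexp(\tX_i)) E_i}_{=:S_1} \;+\; \underbrace{\frac{1}{n}\sum_{i=1}^n \bigl(\gexp(\tX_i) E_i - \EE[\gexp(\tX_i) E_i]\bigr)}_{=:S_2}.
\end{equation*}

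Each $E_i$ has operator norm $1$, and under uniform sampling $\EE[E_i E_i^\top] = (1/m_1) I_{m_1}$ and $\EE[E_i^\top E_i] = (1/m_2) I_{m_2}$, so the variance proxy of the Bernstein bounds is of order $1/m$ (times a noise factor). For $S_2$, the summands are bounded in operator norm by $L_\gamma$ with variance proxy $L_\gamma^2/m$, so the standard non-commutative Bernstein inequality gives $\|S_2\|_{\sigma,\infty} \leq c^* L_\gamma \sqrt{2\log(d)/(mn)}$ with probability at least $1-d^{-1}$, provided $n \geq (8/9)\log(d) \, m$, which is part of our sample-size hypothesis. For $S_1$, the entries $Y_i - \gexp(\tX_i)$ are centred and sub-exponential by Assumption \ref{A3}, and their conditional variance is bounded by $\sigup^2$ via Assumption \ref{A0} and Remark \ref{rem:exp_mom}; a Bernstein-type inequality for sums of sub-exponential random matrices (in the spirit of Koltchinskii--Tsybakov--Lounici and Lafond et al.) then yields $\|S_1\|_{\sigma,\infty} \leq c_\gamma \sigup \sqrt{2\log(d)/(mn)}$ with probability at least $1-d^{-1}$, exactly under the sample-size condition on $\sexp_\gamma^2/\sigup^2 \log^2(\sexp_\gamma\sqrt{m/\siglo^2})$ stated in the theorem (which is needed for the sub-exponential tail to behave like a sub-Gaussian tail at this scale).

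Combining these two bounds by a union bound shows that, with probability at least $1-2d^{-1}$, the choice $\lambda = (c_\gamma \sigup + c^* L_\gamma)\sqrt{2\log(d)/(mn)}$ satisfies $\lambda \geq \|\nabla \Likpi(\tX)\|_{\sigma,\infty}$. On this event, inequality \eqref{eq:or_up_frob2} of Theorem \ref{th:oracle_up} (first term of the minimum) applies, and substituting the explicit value of $\lambda^2$ gives the announced bound with $\tilde C$ a numerical constant (absorbing the factor $(1+\sqrt{2})^2/2$ and the factor $2$ from $\lambda^2$). The main technical obstacle is the matrix Bernstein inequality for the sub-exponential term $S_1$; the precise form of the sample-size condition is exactly what is required for that inequality to produce the same $\sqrt{\log(d)/(mn)}$ rate as in the bounded case, rather than the slower rate one would otherwise obtain for sub-exponential tails.
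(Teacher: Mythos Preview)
Your proposal is correct and follows essentially the same route as the paper's proof: the paper also splits $\nabla \Likpi(\tX)$ into the noise term $\frac{1}{n}\sum_i (Y_i-\gexp(\tX_i))E_i$ and the centred sampling term $\frac{1}{n}\sum_i \gexp(\tX_i)E_i-\EE[\gexp(\tX_1)E_1]$, controls the first via the sub-exponential matrix Bernstein inequality (reusing \eqref{control:score} from the proof of \Cref{th:th_bis}) and the second via the bounded matrix Bernstein inequality, then combines with \Cref{th:oracle_up} exactly as you do. The only cosmetic difference is that the paper bounds the centred summands by $2L_\gamma$ rather than $L_\gamma$, which is absorbed into the numerical constant.
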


\begin{remark}
For simplicity we have considered here only the case of uniform
sampling distributions. However if we assume that the sampling satisfies 
\Cref{A1}, \Cref{A2} and that there exists an absolute
constant $\rho$ such that $\pi_{k,l} \leq \rho/\sqrt{m_1m_2}$
for any $m_1,m_2 \in \RR$,
then it is clear from the proof that the same bound still holds
for a general $\iid$ sampling, up
to factors depending on $\mu$, $\Lc$ and $\rho$.
\end{remark}

\begin{remark}
If $\gamma$ is treated as a constant,
the rate obtained 
for the Frobenius error is the same as in \Cref{th:th_bis}.
If not, the two rates might differ because the rate of \Cref{oracle:prob_up}
depends on the constant $L_\gamma$,
which does not appear in \Cref{th:th_bis}.
Note in addition that \remarkref{rem:unif_vs_subexp} also applies to \Cref{oracle:prob_up}.
\end{remark}

\begin{proof}
 The proof is similar
 to the one of \Cref{th:th_bis},
 see \Cref{subsec:proof_th_oracle_up}.
\end{proof}
\subsection{Lower Bound}
\label{subsec:low}
It can be shown that the upper bounds obtained in \Cref{th:th_bis,oracle:prob_up}
are in fact lower bounds (up to a logarithmic factor) when $\gamma$ is treated as a constant.
Before stating the result, let us first introduce the set $\mathcal{F}(r,\gamma)$ of matrices of rank
at most $r$ whose entries are bounded by $\gamma$:
\begin{equation*} 
 \begin{split}
 \mathcal{ F}(r,\gamma)
 &= \left \{\mat{\tX}\in\,\mathbb R^{m_1\times m_2}:\,
 \mathrm{rank}(\mat{\tX})\leq r,\,\Vert\mat{\tX}\Vert_{\infty}\leq \gamma \right \}.
 \end{split}
 \end{equation*}
The infimum over all  estimators
$\mat{\hat{X}}$  that are measurable functions of the data $(\omega_i,Y_i)_{i=1}^{n}$
is denoted by $\inf_{\mat{\hat{X}}}$.

\begin{theorem}
\label{th:th_low}
There exists two constants $c>0$ and $\theta>0$ such that,
 for all $m_1,m_2\geq 2$, $1\leq  r\leq m_1 \wedge m_2$,
and $\gamma>0$,
\begin{equation*}
\inf_{\mat{\hat{X}}}
\sup_{\substack{\mat{\tX}\in\,{\cal F}( r,\gamma)
}}
\PP_{\mat{\tX}}\left (\dfrac{\Vert \mat{\hat{X}}-\mat{\tX}\Vert_2^{2}}{m_1m_2} 
> c  \min\left \{\gamma^{2}, \dfrac{Mr}{n\,\sigup^2}\right \} \right )\ \geq \theta \eqsp,
\end{equation*}
\end{theorem}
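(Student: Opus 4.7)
The plan is to reduce the estimation problem to a multiple hypothesis testing problem and apply Fano's method, adapting the classical lower bound strategy used by \cite{Koltchinskii_Tsybakov_Lounici11} in the additive sub-Gaussian setting. Without loss of generality I assume $m_2 = M$, and I fix $\Pi$ to be the uniform distribution on $[m_1]\times[m_2]$; this distribution trivially satisfies \Cref{A1} and \Cref{A2} and is the simplest instance to work with.

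The first step is to build a suitable packing $\mathcal{A}\subset\mathcal{F}(r,\gamma)$. I would partition the rows $[m_1]$ into $r$ consecutive blocks of size $m'=\lfloor m_1/r\rfloor$, and for every sign pattern $\varepsilon=(\varepsilon^{(1)},\dots,\varepsilon^{(r)})\in\{-1,+1\}^{r\times m_2}$ define $X^\varepsilon$ so that every row in the $k$-th block equals $\delta\,\varepsilon^{(k)}$ (remaining rows zero), where
\[
\delta \;=\; \min\!\Bigl\{\gamma,\; c_1\sqrt{M/(n\,\sigup^2)}\Bigr\}
\]
for a small constant $c_1>0$ to be fixed later. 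Each $X^\varepsilon$ has rank at most $r$ and supremum norm bounded by $\delta\leq\gamma$, hence belongs to $\mathcal{F}(r,\gamma)$. The Varshamov--Gilbert lemma then provides a subset of sign patterns of cardinality at least $2^{rm_2/8}$ with pairwise Hamming distance at least $rm_2/8$, which in turn yields distinct elements of the corresponding packing $\mathcal{A}$ satisfying $\|X^\varepsilon-X^{\varepsilon'}\|_{\sigma,2}^2 \geq c_2\,\delta^2\,m_1 m_2$ for a numerical constant $c_2$.

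The next step is to control the Kullback--Leibler divergence between the product laws. Using the quadratic upper bound $\Breg{G}{x}{x'}\leq(\sigup^2/2)(x-x')^2$ on $[-\gamma,\gamma]$ provided by \Cref{A0} and the remark that follows it, together with uniform sampling, one gets
\[
\KL{\PP_{X^\varepsilon}^{\otimes n}}{\PP_{X^{\varepsilon'}}^{\otimes n}} \;=\; n\,\Bregpi{X^\varepsilon}{X^{\varepsilon'}} \;\leq\; \frac{n\,\sigup^2}{2\,m_1 m_2}\|X^\varepsilon-X^{\varepsilon'}\|_{\sigma,2}^2 \;\leq\; 2\,n\,\sigup^2\,\delta^2.
\]
Choosing $c_1$ small enough, this KL divergence is bounded above by $\alpha\log|\mathcal{A}|$ for some $\alpha\in(0,1/8)$, which is exactly what is required for a Fano-type inequality to yield a constant testing error. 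Applying such a Fano argument gives numerical constants $c,\theta>0$ with
\[
\inf_{\hat X}\ \sup_{X\in\mathcal{A}}\,\PP_X\!\left(\|\hat X-X\|_{\sigma,2}^2 > c\,m_1 m_2\,\min\{\gamma^2,\,Mr/(n\sigup^2)\}\right)\;\geq\;\theta,
\]
and the conclusion follows upon division by $m_1 m_2$ and restriction of the supremum to the subset $\mathcal{A}\subset\mathcal{F}(r,\gamma)$.

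The main obstacle I foresee is the sharp propagation of the strong-convexity constant $\sigup^2$ from \Cref{A0} into the KL upper bound: the factor $1/\sigup^2$ in the minimax rate comes entirely from this step, and one must take care that the quadratic upper bound on Bregman divergences is applied only to entries in $[-\delta,\delta]\subset[-\gamma,\gamma]$, so that $\sigup$ is the correct (tight) constant. The combinatorial packing and Fano steps are otherwise routine, and the two branches of the minimum naturally arise from the two possible values of $\delta$: the $\gamma^2$ regime corresponds to small sample size, where the box constraint $\|X\|_\infty\leq\gamma$ is active, and the $Mr/(n\sigup^2)$ regime corresponds to the statistical rate dictated by the KL/Fano trade-off.
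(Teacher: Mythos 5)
Your overall strategy is the same as the paper's: a packing of $\mathcal{F}(r,\gamma)$ built from a block structure with $rM$ free bits, pruned by Varshamov--Gilbert, with the Kullback--Leibler divergence between hypotheses controlled through the upper curvature bound $\dgexp\leq\sigup^2$ of \Cref{A0} (so that the KL of an exponential family reduces to a quadratic form in the parameter difference), followed by Fano/Tsybakov. The paper uses entries in $\{0,\kappa\gamma\}$ with column blocks rather than your $\pm\delta$ sign patterns with row blocks, but that difference is cosmetic.

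There is, however, one concrete error: your amplitude $\delta=\min\{\gamma,\,c_1\sqrt{M/(n\sigup^2)}\}$ is missing a factor $\sqrt{r}$, and as written your construction proves a bound that is weaker than the theorem by a factor of $r$. Your separation is $\|X^\varepsilon-X^{\varepsilon'}\|_{\sigma,2}^2\geq c_2\,\delta^2 m_1m_2$, so after dividing by $m_1m_2$ the lower bound you obtain is of order $\delta^2=\min\{\gamma^2,\,c_1^2M/(n\sigup^2)\}$, not $\min\{\gamma^2,\,Mr/(n\sigup^2)\}$; these agree only when $r=1$. The point of the construction is precisely that the log-cardinality of the packing, $\log|\mathcal{A}|\gtrsim rM$, grows linearly in $r$, so the Fano constraint $\mathrm{KL}\leq\alpha\log|\mathcal{A}|$, i.e.\ $2n\sigup^2\delta^2\lesssim \alpha rM$, \emph{permits} the larger amplitude $\delta^2\asymp rM/(n\sigup^2)$, and you must take it to get the factor $r$ into the rate. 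The fix is to set $\delta=\min\{\gamma,\,c_1\sqrt{rM/(n\sigup^2)}\}$ with $c_1^2\leq\alpha\log 2/16$ (this is exactly the role of $\kappa\gamma$ in the paper, whose $\kappa$ carries the $\sqrt{\alpha m_1 r}$ factor); with your current choice the KL condition is satisfied with room to spare, which is the symptom that the hypotheses were placed too close together. A minor further remark: you fix the sampling to be uniform, whereas the paper's KL bound $\KLd{\PP_{X}}{\PP_{X^0}}\leq n\sigup^2(\kappa\gamma)^2/2$ holds entrywise and hence for an arbitrary sampling distribution; your argument should be phrased so that it does not silently weaken the statement to a particular sampling scheme.
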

\begin{remark}
\Cref{th:th_low} provides a lower bound
 of order $\mathcal{O}(Mr/(n\,\sigup^2)$. 
 The order of the ratio between this lower bound and
 the upper bounds of \Cref{th:th_bis} is
$ (c_\gamma (\sigup/\siglo)^4 \log(d)\vee \sigup^2)$.
If $\gamma$ is treated as a constant, 
lower and upper bounds are therefore the same up to a logarithmic
factor.
\end{remark}

\begin{proof}
 See \Cref{proof_low}.
\end{proof}

 \section{Proofs of main results}
\label{sec:proof}
For $\mat{X} \in  \matset{m_1}{m_2}$,  denote by $\mathcal{S}_1(\mat{X}) \subset \RR^{m_1}$ (\resp $\mathcal{S}_2(\mat{X}) \subset \RR^{m_2}$)
the linear spans generated by left (\resp right) singular vectors of $\mat{X}$.
Let $P_{\mathcal{S}^\bot_1(\mat{X})}$  (\resp $P_{\mathcal{S}^\bot_2(\mat{X})}$)
denotes the orthogonal projections on $\mathcal{S}^\bot_1(\mat{X})$ (\resp $\mathcal{S}^\bot_2(\mat{X})$).
We then define the following orthogonal projections on $\RR^{m_1 \times m_2}$
\begin{equation}
\label{eq:def_projec}
\Proj_{\mat{X}}^\bot:\mat{\tilde{X}} \mapsto P_{\mathcal{S}^\bot_1(\mat{X})}\mat{\tilde{X}} P_{\mathcal{S}^\bot_2(\mat{X})}
\text{ and } \Proj_{\mat{X}}: \mat{\tilde{X}} \mapsto \mat{\tilde{X}}-\Proj_{\mat{X}}^\bot(\mat{\tilde{X}})\eqs.
\end{equation}

\subsection{Proof of \Cref{th:th_base} }
\label{proof_base}
From Definition \eqref{eq:MinPb1C},
$\Obj(\est) \leq \Obj(\tX)$ holds, or equivalently
\begin{equation*}
\Bregemp{\est}{\tX} \leq \lambda ( \| \tX \|_{\sigma,1} - \| \est \|_{\sigma,1} )
- \pscal{ \nabla \Lik(\tX)}{\est-\tX} \eqs,
\end{equation*}
with $\Bregemp{\cdot}{\cdot}$ defined in \eqref{eq:def_bregemp}.
The first term of the right hand side can be upper bounded
using \lemmaref{lem:algebre}\speq\eqref{diffschatten} and the second by duality
(between $\|\cdot\|_{\sigma,1}$ and $\|\cdot\|_{\sigma,\infty}$) and 
the assumption on $\lambda$,
which yields
\begin{equation*}
\Bregemp{\est}{\tX} \leq \lambda \left(\| \Proj_{\tX}(\est-\tX) \|_{\sigma,1} 
+ \frac{1}{2} \| \est - \tX \|_{\sigma,1} \right) \eqs.
\end{equation*}
Using \lemmaref{lem:algebre}\speq\eqref{ProjRel} to bound the first term and
\lemmaref{lem:algebre2}\speq\eqref{NucFob} for the second, leads to
\begin{equation}
\label{eq:th_base_up}
 \Bregemp{\est}{\tX} \leq 3 \lambda \sqrt{ 2 \rank( \tX ) } \| \est - \tX \|_{\sigma,2} \eqs. 
\end{equation}
On the other hand, by strong convexity of $G$ (\Cref{A0}), we get
\begin{equation}
\label{eq:def:emp_frob}
 \empf{\est}{\tX} \eqdef \frac{1}{n} \sum_{i=1}^n (\est_i-\tX_i)^2 
 \leq \frac{2}{\siglo^2} \Bregemp{\est}{\tX} \eqs.
\end{equation}
We then define the threshold $\beta \eqdef 8 \rme \gamma^2 \sqrt{\log(d)/n} $ 
and distinguish the two following cases.
\\
\textbf{Case 1} \quad If $\empfw{\est}{\tX} \leq \beta$,
then \lemmaref{lem:delta:frob} yields 
\begin{equation}
\label{eq:proof:th_base:p1}
 \frac{\| \est - \tX \|^2_{\sigma,2} }{m_1m_2} \leq \mu \beta \eqs.
\end{equation}
\textbf{Case 2} \quad If $\empfw{\est}{\tX} > \beta$, then
\lemmaref{lem:algebre2}\speq\eqref{NucFob} and \lemmaref{lem:delta:frob} combined together give\\
$\est \in \rscset{\beta}{32 \mu m_1 m_2  \rank(\tX)}$,
where $\rscset{\cdot}{\cdot}$ is the set defined as
\begin{multline}
\label{eq:def:rscset}
 \rscset{\beta}{r} \eqdef \left\{ \mat{X} \in \matset{m_1}{m_2} \big| \,\, \| \mat{X} - \tX \|_{\sigma,1}
\leq \sqrt{ r \EE \left[ \empf{\mat{X}}{\tX} \right] } ;
\EE \left[ \empf{\mat{X}}{\tX} \right] > \beta
\right\} \eqs.
 \end{multline}
 Hence, from \lemmaref{DevUnifCont}
 it holds, with probability at least $1-(d-1)^{-1} \geq 1-2d^{-1}$, that

\begin{equation}
\label{eq:proof:th_base:p2a}
\empf{\mat{X}}{\tX} \geq \frac{1}{2} \EE\left[ \empf{\mat{X}}{\tX} \right] - 
 512 \rme (\EE \|\Sigma_R \|_{\sigma,\infty})^2\mu m_1 m_2 \rank(\tX) \eqs.
\end{equation}
Combining \eqref{eq:proof:th_base:p2a} with \eqref{eq:def:emp_frob}, \eqref{eq:th_base_up}
and \lemmaref{lem:delta:frob}
leads to
\begin{equation}
\label{eq:proof_th_base_almostlast}
 \frac{\| \est - \tX \|^2_{\sigma,2}}{2\mu m_1 m_2 } -512 \rme (\EE \|\Sigma_R \|_{\sigma,\infty})^2\mu m_1 m_2 \rank(\tX)
 \leq \frac{6 \lambda}{\siglo^2} \sqrt{ 2m_1 m_2 \rank( \tX ) } \frac{\| \est - \tX \|_{\sigma,2}}{\sqrt{m_1 m_2}} \eqs.
\end{equation}
Using the identity $ab \leq a^2 + b^2/4$  in \eqref{eq:proof_th_base_almostlast} and combining with
\eqref{eq:proof:th_base:p1} achieves the proof of \Cref{th:th_base}.


\begin{lemma}
\label{lem:algebre}
 For any pair of matrices $\mat{X}, \, \mat{\tilde{X}} \in \matset{m_1}{m_2}$ we have
 \begin{enumerate}[(i)]
  \item $ \|\mat{X} + \Proj_{\mat{X}}^\bot(\mat{\tilde{X}}) \|_{\sigma,1}=\|\mat{X}\|_{\sigma,1} 
  +  \|\Proj_{\mat{X}}^\bot(\mat{\tilde{X}}) \|_{\sigma,1}\eqs,$ \label{PenRel}
  \item $\| \Proj_{\mat{X}}(\mat{\tilde{X}}) \|_{\sigma,1} 
  \leq \sqrt{2 \rank(\mat{X})} \|\mat{\tilde{X}}\|_{\sigma,2}\eqs,$ \label{ProjRel}
  \item  $ \|\mat{X}\|_{\sigma,1}-\|\mat{\tilde{X}}\|_{\sigma,1}
  \leq  \|\Proj_{\mat{X}}(\mat{\tilde{X}}-\mat{X})\|_{\sigma,1}\eqs.$ \label{diffschatten}
  \end{enumerate}

\end{lemma}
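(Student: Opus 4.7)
The plan is to treat the three parts separately, each time exploiting the singular value decomposition $\mat{X} = U \Sigma V^\top$ of rank $r = \rank(\mat{X})$ to make the projections explicit: $P_{\mathcal{S}_1(\mat{X})} = U U^\top$, $P_{\mathcal{S}_2(\mat{X})} = V V^\top$, and $\Proj_{\mat{X}} = \mathrm{Id} - \Proj_{\mat{X}}^\bot$. Throughout I will use that $\Proj_{\mat{X}}^\bot$ and $\Proj_{\mat{X}}$ are complementary orthogonal projections with respect to the Frobenius inner product, so in particular $\|\Proj_{\mat{X}}(\mat{\tilde X})\|_{\sigma,2} \leq \|\mat{\tilde X}\|_{\sigma,2}$. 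The only genuinely delicate step in the whole lemma is the block-diagonal identification that drives (i); once additivity of singular values is in place, (ii) and (iii) reduce to algebraic manipulations.

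For part (i), I would complete $U$ and $V$ into full orthogonal matrices $(U, U_\perp) \in \mathrm{O}(m_1)$ and $(V, V_\perp) \in \mathrm{O}(m_2)$ by choosing orthonormal bases of $\mathcal{S}_1^\bot(\mat{X})$ and $\mathcal{S}_2^\bot(\mat{X})$. A direct block computation then gives
\begin{equation*}
(U, U_\perp)^\top \bigl( \mat{X} + \Proj_{\mat{X}}^\bot(\mat{\tilde X}) \bigr) (V, V_\perp) = \begin{pmatrix} \Sigma & 0 \\ 0 & U_\perp^\top \mat{\tilde X} V_\perp \end{pmatrix},
\end{equation*}
because the relations $U^\top U_\perp = 0$ and $V^\top V_\perp = 0$ kill all off-diagonal blocks coming from both $\mat{X}$ and $\Proj_{\mat{X}}^\bot(\mat{\tilde X}) = U_\perp U_\perp^\top \mat{\tilde X} V_\perp V_\perp^\top$. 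Since orthogonal conjugation preserves singular values and the singular values of a block-diagonal matrix are the disjoint union of those of its blocks, I would conclude that $\|\mat{X} + \Proj_{\mat{X}}^\bot(\mat{\tilde X})\|_{\sigma,1}$ equals $\|\Sigma\|_{\sigma,1} + \|U_\perp^\top \mat{\tilde X} V_\perp\|_{\sigma,1} = \|\mat{X}\|_{\sigma,1} + \|\Proj_{\mat{X}}^\bot(\mat{\tilde X})\|_{\sigma,1}$, using once more that the isometries $U_\perp$ and $V_\perp$ do not alter the nuclear norm.

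For part (ii), expanding the identity as $\mathrm{Id} = P_{\mathcal{S}_1(\mat{X})} + P_{\mathcal{S}_1^\bot(\mat{X})}$ on the left and $\mathrm{Id} = P_{\mathcal{S}_2(\mat{X})} + P_{\mathcal{S}_2^\bot(\mat{X})}$ on the right yields
\begin{equation*}
\Proj_{\mat{X}}(\mat{\tilde X}) = \mat{\tilde X} - \Proj_{\mat{X}}^\bot(\mat{\tilde X}) = P_{\mathcal{S}_1(\mat{X})} \mat{\tilde X} + P_{\mathcal{S}_1^\bot(\mat{X})} \mat{\tilde X} P_{\mathcal{S}_2(\mat{X})}.
\end{equation*}
The first summand has column space contained in $\mathcal{S}_1(\mat{X})$ and the second has row space contained in $\mathcal{S}_2(\mat{X})$, so each has rank at most $r$ and hence $\rank(\Proj_{\mat{X}}(\mat{\tilde X})) \leq 2r$. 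Cauchy--Schwarz applied to the vector of nonzero singular values gives $\|A\|_{\sigma,1} \leq \sqrt{\rank(A)}\,\|A\|_{\sigma,2}$ for any matrix $A$; combining this with the Frobenius contraction $\|\Proj_{\mat{X}}(\mat{\tilde X})\|_{\sigma,2} \leq \|\mat{\tilde X}\|_{\sigma,2}$ delivers the claimed bound $\sqrt{2r}\,\|\mat{\tilde X}\|_{\sigma,2}$.

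For part (iii), I would split $\mat{\tilde X} = \mat{X} + \Proj_{\mat{X}}^\bot(\mat{\tilde X} - \mat{X}) + \Proj_{\mat{X}}(\mat{\tilde X} - \mat{X})$, apply the triangle inequality to move the last term out, and then invoke (i) on the remaining sum to obtain
\begin{equation*}
\|\mat{\tilde X}\|_{\sigma,1} \geq \|\mat{X}\|_{\sigma,1} + \|\Proj_{\mat{X}}^\bot(\mat{\tilde X} - \mat{X})\|_{\sigma,1} - \|\Proj_{\mat{X}}(\mat{\tilde X} - \mat{X})\|_{\sigma,1}.
\end{equation*}
Dropping the nonnegative middle term on the right and rearranging finishes (iii) and completes the lemma.
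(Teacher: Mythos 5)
Your proof is correct and follows essentially the same route as the paper's: the additivity of the nuclear norm for matrices with orthogonal row and column spaces for (i) (which the paper cites as a known fact and you verify via the block-diagonal SVD computation), the rank-$2\rank(\mat{X})$ decomposition of $\Proj_{\mat{X}}(\mat{\tilde{X}})$ plus Cauchy--Schwarz and the Frobenius contraction for (ii), and the splitting $\mat{\tilde{X}}=\mat{X}+\Proj_{\mat{X}}^\bot(\mat{\tilde{X}}-\mat{X})+\Proj_{\mat{X}}(\mat{\tilde{X}}-\mat{X})$ combined with (i) and the triangle inequality for (iii). The only difference is cosmetic: your decomposition in (ii) is $P_{\mathcal{S}_1(\mat{X})}\mat{\tilde{X}}+P_{\mathcal{S}_1^\bot(\mat{X})}\mat{\tilde{X}}P_{\mathcal{S}_2(\mat{X})}$ while the paper writes the algebraically identical $P_{\mathcal{S}_1(\mat{X})}\mat{\tilde{X}}P_{\mathcal{S}_2^\bot(\mat{X})}+\mat{\tilde{X}}P_{\mathcal{S}_2(\mat{X})}$, and both yield the same rank bound.
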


\begin{lemma}
\label{lem:algebre2}
Let $\mat{X},\mat{\tilde{X}} \in \matset{m_1}{m_2}$ satisfying 
$\|\mat{X}\|_{\infty}\leq \gamma$ and $\|\mat{\tilde{X}}\|_{\infty}\leq \gamma$.
Assume that $\lambda>2 \| \nabla \Lik( \tX ) \|_{\sigma,\infty}$
and $\Obj(\mat{X}) \leq \Obj(\mat{\tilde{X}})$. Then
\begin{enumerate}[(i)]
 \item $\|\Proj_{\mat{\tilde{X}}}^\bot(\mat{X}-\mat{\tilde{X}})\|_{\sigma,1} 
\leq 3 \|\Proj_{\mat{\tilde{X}}}(\mat{X}-\mat{\tilde{X}})\|_{\sigma,1}\eqs,$  \label{projrel} 
 \item $\|\mat{X}-\mat{\tilde{X}}\|_{\sigma,1}
\leq 4 \sqrt{2 \rank(\mat{\tilde{X}})} \|(\mat{X}-\mat{\tilde{X}})\|_{\sigma,2}\eqs.$ \label{NucFob} 
\end{enumerate}
\end{lemma}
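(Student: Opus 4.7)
The plan is to carry out the standard ``cone condition'' argument that couples the optimality inequality $\Obj(\mat{X}) \leq \Obj(\mat{\tilde{X}})$ with the convexity of $\Lik$ and the Schatten duality $\langle A,B\rangle \leq \|A\|_{\sigma,\infty}\|B\|_{\sigma,1}$. Part (ii) will then follow from (i) by combining the triangle inequality with Lemma \ref{lem:algebre}\eqref{ProjRel}, so the real work lies in establishing (i).

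For (i), I would first rewrite $\Obj(\mat{X}) \leq \Obj(\mat{\tilde{X}})$ as
\begin{equation*}
\Lik(\mat{X}) - \Lik(\mat{\tilde{X}}) \leq \lambda\bigl(\|\mat{\tilde{X}}\|_{\sigma,1} - \|\mat{X}\|_{\sigma,1}\bigr).
\end{equation*}
To turn this into a bound of the form $\|\mat{X}\|_{\sigma,1} - \|\mat{\tilde{X}}\|_{\sigma,1} \leq \tfrac{1}{2}\|\mat{X}-\mat{\tilde{X}}\|_{\sigma,1}$, I invoke convexity of $\Lik$ (which holds since $\Gexp$ is convex under \Cref{A0}) at the anchor point $\tX$, obtaining $\Lik(\mat{X}) - \Lik(\mat{\tilde{X}}) \geq \langle \nabla\Lik(\tX), \mat{X}-\mat{\tilde{X}}\rangle$ in the setting used throughout the paper, where $\mat{\tilde{X}}=\tX$, so that the Bregman residual vanishes. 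Schatten duality and the hypothesis $\lambda > 2\|\nabla\Lik(\tX)\|_{\sigma,\infty}$ then bound $|\langle \nabla\Lik(\tX), \mat{X}-\mat{\tilde{X}}\rangle| \leq (\lambda/2)\|\mat{X}-\mat{\tilde{X}}\|_{\sigma,1}$, producing the desired contraction on the nuclear-norm difference.

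Setting $\Delta := \mat{X}-\mat{\tilde{X}}$ and using the decomposition $\mat{X} = \mat{\tilde{X}} + \Proj_{\mat{\tilde{X}}}\Delta + \Proj_{\mat{\tilde{X}}}^{\bot}\Delta$, I would apply the reverse triangle inequality together with Lemma \ref{lem:algebre}\eqref{PenRel} to get
\begin{equation*}
\|\mat{X}\|_{\sigma,1} \geq \|\mat{\tilde{X}}\|_{\sigma,1} + \|\Proj_{\mat{\tilde{X}}}^{\bot}\Delta\|_{\sigma,1} - \|\Proj_{\mat{\tilde{X}}}\Delta\|_{\sigma,1}.
\end{equation*}
Combining this with the bound above, together with $\|\Delta\|_{\sigma,1} \leq \|\Proj_{\mat{\tilde{X}}}\Delta\|_{\sigma,1} + \|\Proj_{\mat{\tilde{X}}}^{\bot}\Delta\|_{\sigma,1}$, and collecting the terms $\|\Proj_{\mat{\tilde{X}}}^{\bot}\Delta\|_{\sigma,1}$ on the left and $\|\Proj_{\mat{\tilde{X}}}\Delta\|_{\sigma,1}$ on the right, yields (i) with constant $3$.

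For (ii), the direct triangle inequality gives $\|\Delta\|_{\sigma,1} \leq \|\Proj_{\mat{\tilde{X}}}\Delta\|_{\sigma,1} + \|\Proj_{\mat{\tilde{X}}}^{\bot}\Delta\|_{\sigma,1}$, which combined with (i) leads to $\|\Delta\|_{\sigma,1} \leq 4\|\Proj_{\mat{\tilde{X}}}\Delta\|_{\sigma,1}$; Lemma \ref{lem:algebre}\eqref{ProjRel} then bounds $\|\Proj_{\mat{\tilde{X}}}\Delta\|_{\sigma,1} \leq \sqrt{2\rank(\mat{\tilde{X}})}\|\Delta\|_{\sigma,2}$, giving (ii). The main obstacle is the first step: reconciling the hypothesis on $\lambda$, which is phrased in terms of $\nabla\Lik(\tX)$, with the apparently more general statement of the lemma. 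The convexity argument anchored at $\tX$ is clean precisely when $\mat{\tilde{X}}=\tX$, which is the only case in which this lemma is actually invoked in the paper's proofs; everything else is routine algebra with the trace-norm/operator-norm pair.
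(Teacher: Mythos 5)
Your proposal is correct and follows essentially the same route as the paper's proof: the optimality inequality combined with convexity of $\Lik$ and Schatten duality to get the contraction $\|\mat{X}\|_{\sigma,1}-\|\mat{\tilde{X}}\|_{\sigma,1}\leq \tfrac{1}{2}\|\mat{X}-\mat{\tilde{X}}\|_{\sigma,1}$, then \lemmaref{lem:algebre}\speq\eqref{PenRel} and the triangle inequality to extract the cone condition (i), and finally \eqref{ProjRel} for (ii). Your remark about the anchor point is apt --- the paper's proof silently writes $\nabla\Lik(\mat{\tilde{X}})$ while the hypothesis controls $\nabla\Lik(\tX)$, which is harmless only because the lemma is invoked with $\mat{\tilde{X}}=\tX$, exactly as you observe.
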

\begin{lemma}
\label{lem:delta:frob}
 Under \Cref{A1}, for any $X \in \matset{m_1}{m_2}$ it holds
 \begin{equation*}
\empfw{X}{\tX}
\geq  \frac{1}{\mu m_1 m_2} \| X - \tX \|_{\sigma,2}^2 \eqs.
\end{equation*}
\end{lemma}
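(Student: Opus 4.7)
The statement is essentially a one-line consequence of the definitions, so my plan is short. The quantity $\empfw{X}{\tX}$ unfolds to $\sum_{k,l} \pi_{kl}(X_{kl}-\tX_{kl})^2$, while the squared Schatten $2$-norm coincides with the squared Frobenius norm and therefore equals $\sum_{k,l}(X_{kl}-\tX_{kl})^2$. So the inequality reduces to comparing a weighted sum of nonnegative terms to the uniformly weighted one.

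The plan is therefore: first rewrite the left hand side as $\sum_{k,l}\pi_{kl}(X_{kl}-\tX_{kl})^2$ using the definition of $\empfw{\cdot}{\cdot}$. Second, invoke Assumption~\Cref{A1} pointwise, which gives $\pi_{kl}\geq 1/(\mu m_1 m_2)$ for every $(k,l)\in[m_1]\times[m_2]$. Since $(X_{kl}-\tX_{kl})^2\geq 0$, multiplying term by term preserves the inequality, yielding
\[
\sum_{k,l}\pi_{kl}(X_{kl}-\tX_{kl})^2 \;\geq\; \frac{1}{\mu m_1 m_2}\sum_{k,l}(X_{kl}-\tX_{kl})^2.
\]
Third, identify the right hand sum with $\|X-\tX\|_{\sigma,2}^2$ via the standard equality between Schatten-$2$ norm and the Frobenius norm (sum of squared entries equals sum of squared singular values).

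There is no real obstacle here: the only thing to be careful about is bookkeeping between the definitions of $\empf{\cdot}{\cdot}$, $\empfw{\cdot}{\cdot}$, and $\Bregpi{\cdot}{\cdot}$, to make sure one is indeed working with the expectation of the empirical quadratic error and not, e.g., the empirical error itself (which would need a concentration argument rather than a deterministic bound). Once that identification is made, the lemma is immediate from \Cref{A1} and the fact that Frobenius norm is the $\ell_2$ norm of the entries.
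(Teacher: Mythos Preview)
Your proposal is correct and matches the paper's own treatment: the paper simply states that \lemmaref{lem:delta:frob} ``follows directly from \Cref{A1}'', and your write-up is exactly the one-line unfolding of that remark (lower-bound each $\pi_{kl}$ by $1/(\mu m_1 m_2)$ and identify the Frobenius norm with the entrywise $\ell_2$ norm).
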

%
%
\begin{lemma}
 \label{DevUnifCont}
For $\beta= 8 \rme \gamma^2 \sqrt{\log(d)/n}$,
 with probability at least $1- (d-1)^{-1}$,   
we have for all $\mat{X} \in \rscset{\beta}{r}$:
 \begin{equation*}
   \left|\empf{X}{\tX} - \EE\left[\empf{X}{\tX}\right] \right|
   \leq \frac{\EE\left[\empf{X}{\tX}\right]}{2}+16 \rme (\EE \|\Sigma_R \|_{\sigma,\infty})^2\eqs r \eqs,
 \end{equation*}
 with $\rscset{\beta}{r}$ defined in \eqref{eq:def:rscset}.
\end{lemma}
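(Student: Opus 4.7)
The plan is a peeling argument paired with Bousquet's version of Talagrand's inequality for bounded empirical processes. Introduce the centered process $Z(X)\eqdef\empf{X}{\tX}-\EE[\empf{X}{\tX}]$; the goal is a uniform bound on $|Z(X)|$ over $\rscset{\beta}{r}$, under the implicit constraint $\|X\|_\infty\le\gamma$ ensured by the application in \Cref{proof_base}.

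First I would peel $\rscset{\beta}{r}\cap\{\|X\|_\infty\le\gamma\}$ into dyadic slices
\[
S_j\eqdef\bigl\{X:\|X\|_\infty\le\gamma,\ 2^{j-1}\beta\le\EE[\empf{X}{\tX}]\le 2^j\beta,\ \|X-\tX\|_{\sigma,1}\le\sqrt{r\,\EE[\empf{X}{\tX}]}\bigr\},
\]
$j=1,\dots,J$ with $J=\lceil\log_2(4\gamma^2/\beta)\rceil=\mathcal{O}(\log n)$, since $|X_{kl}-\tX_{kl}|\le 2\gamma$ forces $\EE[\empf{X}{\tX}]\le 4\gamma^2$. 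On each slice I would bound $\EE\sup_{X\in S_j}|Z(X)|$ by Gin\'e--Zinn symmetrization followed by the Ledoux--Talagrand contraction principle applied to $\phi(t)=t^2$, which is $4\gamma$-Lipschitz on $[-2\gamma,2\gamma]$ and vanishes at zero; this linearises the process into a first-order Rademacher sum. Duality between $\|\cdot\|_{\sigma,1}$ and $\|\cdot\|_{\sigma,\infty}$, combined with the trace-norm constraint defining $S_j$, then yields
\[
\EE\sup_{X\in S_j}|Z(X)|\le 16\gamma\,\EE\|\Sigma_R\|_{\sigma,\infty}\sqrt{r\cdot 2^j\beta}.
\]

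Next I would upgrade this to a high-probability statement via Bousquet's inequality: the summands in $\empf{X}{\tX}$ are bounded by $4\gamma^2$ with variance at most $4\gamma^2\cdot 2^j\beta$, hence with probability $1-\rme^{-\tau}$,
\[
\sup_{X\in S_j}|Z(X)|\le 2\,\EE\sup_{X\in S_j}|Z(X)|+c_1\gamma\sqrt{2^j\beta\,\tau/n}+c_2\gamma^2\tau/n.
\]
Choosing $\tau=\log(J(d-1))$ and a union bound over the $J$ slices give a total failure probability at most $(d-1)^{-1}$. The weighted AM--GM inequality $2uv\le\epsilon u^2+v^2/\epsilon$ splits the Rademacher contribution into a fraction of $2^{j-1}\beta\le\EE[\empf{X}{\tX}]$ plus a residual proportional to $(\EE\|\Sigma_R\|_{\sigma,\infty})^2 r$. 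The calibration $\beta=8\rme\gamma^2\sqrt{\log(d)/n}$ is exactly the threshold ensuring that the two Bousquet noise terms $\gamma\sqrt{2^j\beta\log(d)/n}$ and $\gamma^2\log(d)/n$ also fit within the $\EE[\empf{X}{\tX}]/2$ budget on every slice; collecting contributions across all $j$ produces the announced inequality.

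The main technical hurdle lies in the last step: one must carefully track the constants through symmetrization, contraction and peeling, and verify that every Bernstein-type tail term produced by Bousquet's inequality is genuinely absorbed into the $\EE[\empf{X}{\tX}]/2$ budget on each slice. The threshold $\beta\propto\gamma^2\sqrt{\log(d)/n}$ is engineered precisely for this, and the sharp constant $16\rme$ in the residual emerges from combining the symmetrization factor, the $4\gamma$ contraction factor and the weighted AM--GM constant.
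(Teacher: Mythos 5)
Your proposal follows essentially the same route as the paper's proof: peeling on the value of $\EE[\empf{X}{\tX}]$, symmetrization plus the Ledoux--Talagrand contraction of $t\mapsto t^2$ to reduce to the Rademacher process $\pscal{\Sigma_R}{X-\tX}$, duality against the trace-norm constraint defining $\rscset{\beta}{r}$, a Talagrand-type concentration inequality for the bounded empirical process (the paper uses Massart's inequality where you invoke Bousquet's), and AM--GM to absorb the cross term into $\EE[\empf{X}{\tX}]/2$. The remaining differences are bookkeeping: the paper peels with ratio $\alpha=\rme$ and controls an infinite union bound by summing a convergent series of tails rather than truncating at $J=\mathcal{O}(\log n)$ dyadic slices, and its specific choices $\alpha=\rme$, $\eta=(4\rme)^{-1}$ are what yield the constant $16\rme$ and the threshold $\beta=8\rme\gamma^2\sqrt{\log(d)/n}$.
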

\begin{proof}
 \lemmaref{lem:algebre,lem:algebre2} are proved in \Cref{ap:lemma_algebre}.
 \lemmaref{lem:delta:frob} follows directly from \Cref{A1}.
  See \Cref{ap:cont_sto} for the proof of \lemmaref{DevUnifCont}.
\end{proof}

 \subsection{Proof of \Cref{th:th_bis} }
 \label{proof_bis}
 Starting from \Cref{th:th_base} one only needs to control
 $\EE(\|\Sigma_R\|_{\sigma,\infty})$ and
 $\| \nabla \Lik (\tX) \|_{\sigma,\infty}$ to obtain the result.\\
 \textbf{Control of $\EE(\|\Sigma_R\|_{\sigma,\infty})$:}\quad
One can write  $\Sigma_R\eqdef n^{-1}\sum_{i=1}^{n} Z_i$, with $Z_i\eqdef \varepsilon_i E_i$
  which satisfies $\EE[Z_i]=0$. 
  Recalling the definitions
 $R_k\!=\sum_{l=1}^{m_2} \pi_{k,l}$ and $C_{l}\!=\sum_{k=1}^{m_1} \pi_{k,l}$ for any  $k\in [m_1]$,
$l \in [m_2]$, one obtains
\begin{equation}
\label{control:ezi}
\norm{\EE\left[\frac{1}{n} \sum_{i=1}^{n}  \mat{Z_{i}}\mat{Z_{i}}^\top \right]}_{\sigma,\infty}
\leq   \norm{\diag((R_k)_{k=1}^{m_1})}_{\sigma,\infty} \leq   \frac{\Lc}{m} \eqs,
\end{equation}
where H\ref{A2} was used for the last inequality.
Using a similar argument one also gets
$\|\EE[\sum_{i=1}^{n} \mat{Z_{i}}^\top\mat{Z_{i}} ]\|_{\sigma,\infty}/n\leq  \Lc/m$.
Hence applying \lemmaref{lem:MatExp}
with $U=1$ and $\sigma_Z^2 =\nu/m$, for $n\geq m\log(d)/(9 \Lc)$ yields
\begin{equation}
 \label{proof:th2ineq2}
  \EE \left[\|\Sigma_R  \|_{\sigma,\infty} \right] \leq c^{*}\sqrt{\frac{2\rme\Lc \log(d)}{mn}} \eqs,
\end{equation}
with $c^*$ a numerical constant.\\
 \textbf{Control of  $\| \nabla \Lik (\tX) \|_{\sigma,\infty}$:}\quad
 Let us define $Z'_i\eqdef (Y_i - \gexp(\tX_i)) E_i$,
 which satisfies
 $\nabla \Lik (\tX)\eqdef n^{-1}\sum_{i=1}^{n} Z'_i$ and $\EE[Z'_i]=0$ (as any score function)
 and
 \begin{equation*}
 \label{eq:def_sigma_z_prime}
  \sigma^2_{Z'} \eqdef \max \left(
  \frac{1}{n}\|\EE[\sum_{i=1}^{n} (\mat{Z'_{i}})^\top\mat{Z'_{i}} ]\|_{\sigma,\infty} \:,\:
  \frac{1}{n}\|\EE[\sum_{i=1}^{n} \mat{Z'_{i}}(\mat{Z'_{i}})^\top ]\|_{\sigma,\infty} 
  \right) \eqs.
 \end{equation*}
Using H\ref{A3}, a similar analysis yields $\sigma^2_{Z'} \leq  \sigup^2\Lc/m$.
On the other hand, $\max_{k,l}(R_k,C_l) \geq 1/m$
and $\EE[(Y_i - \gexp(\tX_i))^2] = \dgexp(\tX_i) \geq \siglo^2$ gives
$\sigma^2_{Z'} \geq \siglo^2 /m$.
Applying \propositionref{prop:bernstein_exp} for $t=\log(d)$ gives with probability at least 
$1-d^{-1}$
\begin{equation}
\label{control:score}
\| \nabla \Lik (\tX) \|_{\sigma,\infty} \leq 
c_\gamma \max \left\{ \sigup\sqrt{\Lc/m} \sqrt{ \frac{2 \log(d)}{n}} ,  
  \sexp_\gamma \log(\frac{\sexp_\gamma \sqrt{m}}{\siglo})\frac{2\log(d)}{n} \right \},
\end{equation}
with $c_\gamma$ which depends only on $\sexp_\gamma$.
By assumption on $n$, the left term dominates. Therefore
taking $\lambda$ as in \Cref{th:th_bis} statement
yields $\lambda \geq 2 \| \nabla \Lik (\tX) \|_{\sigma,\infty}$ with probability
at least $1-d^{-1}$. A union bound argument combined to 
\Cref{th:th_base} achieves \Cref{th:th_bis} proof.

\begin{lemma}
 \label{lem:MatExp}
 Consider a finite sequence of independent random matrices 
 $(\Bern{Z_{i}})_{1 \leq i \leq n}\in \RR^{m_1 \times m_2}$ satisfying $\EE[\Bern{Z_{i}}]=0$ 
 and for some $U>0$,
 $\| \Bern{Z_{i}} \|_{\sigma,\infty} \leq U$ for all $i= 1, \dots, n$ and define
\begin{equation*}
 \sigma^2_Z \eqdef \max \left\{\left\| \frac{1}{n} \sum_{i=1}^{n} 
 \EE[  \Bern{Z_{i}}\Bern{Z_{i}^\top} ] \right\|_{\sigma,\infty},
 \left\| \frac{1}{n}\sum_{i=1}^{n} \EE[ \Bern{Z_{i}}^\top\Bern{Z_{i}} ]\right\|_{\sigma,\infty}\right\} \eqs.
\end{equation*}
Then, for any $n \geq (U^2\log(d))/(9\sigma^2_Z)$ the following holds:
\begin{equation*}
  \EE \left[\norm{\frac{1}{n} \sum_{i=1}^{n} \Bern{Z_{i}}}_{\sigma,\infty} \right]
  \leq c^{*}\sigma_Z\sqrt{\frac{2 \rme \log(d)}{n}} \eqs,
\end{equation*}
with $c^*=1+\sqrt{3}$.
\end{lemma}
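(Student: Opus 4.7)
\textbf{Proof plan for \lemmaref{lem:MatExp}.}

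The plan is to apply the standard non-commutative Bernstein tail bound for a sum of independent centered bounded random matrices and then convert it to a bound in expectation by integrating the tail. Concretely, let $S_n \eqdef \sum_{i=1}^n Z_i$. Tropp's matrix Bernstein inequality yields, for every $t \geq 0$,
\begin{equation*}
\PP\left( \|S_n\|_{\sigma,\infty} \geq t \right) \leq d \exp\!\left( -\frac{t^2/2}{n\sigma_Z^2 + Ut/3} \right)\eqs,
\end{equation*}
since the two spectral norms appearing in $\sigma_Z^2$ control the matrix variance proxy of $S_n$ and $\|Z_i\|_{\sigma,\infty} \leq U$ almost surely. Rescaling by $1/n$ and substituting $t \leftarrow nt$, the same estimate holds for $\|n^{-1}S_n\|_{\sigma,\infty}$ with $n\sigma_Z^2$ replaced by $\sigma_Z^2$ in the denominator.

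Next, I would use the identity $\EE[X] \leq t_0 + \int_{t_0}^{\infty} \PP(X \geq t) \eqs dt$ with the threshold $t_0 \eqdef \sigma_Z\sqrt{2\rme \log(d)/n}$. The hypothesis $n \geq U^2 \log(d)/(9\sigma_Z^2)$ is precisely what ensures that in the relevant integration range the variance term $\sigma_Z^2$ dominates the bounded term $Ut/3$ in the Bernstein denominator; so the tail behaves like a sub-Gaussian with parameter proportional to $\sigma_Z^2/n$, and the integral $\int_{t_0}^{\infty} d\exp(-nt^2/(4\sigma_Z^2)) \eqs dt$ can be evaluated explicitly via the Gaussian tail $\int_{a}^{\infty} \rme^{-s^2/2}\eqs ds \leq a^{-1}\rme^{-a^2/2}$ after the change of variable $s = t\sqrt{n/(2\sigma_Z^2)}$. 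The choice of $t_0$ makes the exponential in the resulting bound collapse to $d^{-1}$, so that the tail integral contributes a term of order $\sigma_Z\sqrt{\log(d)/n}$ as well.

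Summing the two contributions (the base level $t_0$ and the tail integral) gives the stated inequality
$\EE[\|n^{-1} S_n\|_{\sigma,\infty}] \leq c^* \sigma_Z\sqrt{2\rme \log(d)/n}$. The main obstacle is keeping track of the numerical constants so as to recover exactly $c^* = 1 + \sqrt{3}$: the ``$1$'' corresponds to the threshold $t_0$ itself, while the ``$\sqrt{3}$'' comes from the Gaussian tail integral once the sub-exponential correction is absorbed using the lower bound on $n$. Everything else is a routine elementary estimate, and no further probabilistic input beyond matrix Bernstein is required.
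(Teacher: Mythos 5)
Your overall strategy is the standard one and, as far as one can tell, matches the proof of the result the paper actually relies on: the paper does not prove this lemma itself but simply cites Lemma 15 of Klopp, Lafond, Moulines and Salmon (2014), which is established precisely by integrating the noncommutative Bernstein tail bound above a suitably chosen threshold. So the skeleton $\EE[\|n^{-1}S_n\|_{\sigma,\infty}]\leq t_0+\int_{t_0}^{\infty}\PP(\|n^{-1}S_n\|_{\sigma,\infty}\geq t)\,dt$ with $t_0=\sigma_Z\sqrt{2\rme\log(d)/n}$ is the right one, and the tail bound you start from is the correct form of matrix Bernstein.

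There is, however, one step that would fail as written. You assert that the hypothesis $n\geq U^2\log(d)/(9\sigma_Z^2)$ ensures that the variance term dominates $Ut/3$ ``in the relevant integration range'', so that the entire tail can be replaced by $d\exp(-nt^2/(4\sigma_Z^2))$. This is not true: the integration range is all of $[t_0,\infty)$, and for $t>3\sigma_Z^2/U$ the linear term $Ut/3$ dominates the denominator no matter how large $n$ is; in that regime $d\exp(-nt^2/(4\sigma_Z^2))$ is \emph{smaller} than the actual Bernstein bound, so the inequality you need points the wrong way. The standard repair is to use $\sigma_Z^2+Ut/3\leq 2\max(\sigma_Z^2,Ut/3)$ to split the tail as $\PP(\cdot\geq t)\leq d\exp(-nt^2/(4\sigma_Z^2))+d\exp(-3nt/(4U))$ and integrate \emph{both} pieces over $[t_0,\infty)$; the assumption on $n$ is then invoked to show that the sub-exponential piece contributes at most another term of order $\sigma_Z\sqrt{\log(d)/n}$, not to make it disappear. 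With that correction, plus the constant bookkeeping you already identify as the delicate part (it is exactly the same case distinction $n\sigma_Z^2\gtrless (U^2/9)\log(d)$ that produces $c^*=1+\sqrt{3}$ in the high-probability version stated elsewhere in the paper), the argument goes through.
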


\begin{proof}
See \cite{Klopp_Lafond_Moulines_Salmon14}[Lemma 15].
\end{proof}

 \begin{proposition}
\label{prop:bernstein_exp}
 Consider a finite sequence of independent 
 random matrices $(\Bern{Z_{i}})_{1 \leq i \leq n}\in \RR^{m_1 \times m_2}$ 
 satisfying $\EE[\Bern{Z_{i}}]=0$. For some $U>0$,
 assume
 \begin{equation*}
   \inf \{ \sexp >0: \EE [ \exp (\| \Bern{Z_{i}} \|_{\sigma,\infty}/ \sexp) ] \leq \rme \}
   \leq U \quad \text{for} \quad i= 1, \dots, n 
 \end{equation*}
 and define $\sigma_Z$ as in \lemmaref{lem:MatExp}.
Then for any $t>0$,
 with probability at least $1-\rme^{-t}$
 \begin{equation*}
  \norm{\frac{1}{n} \sum_{i=1}^{n} \Bern{Z_{i}}}_{\sigma,\infty} 
  \leq c_U \max \left\{ \sigma_Z \sqrt{ \frac{t + \log(d)}{n}} ,  
  U \log(\frac{U}{\sigma_Z})\frac{t + \log(d)}{n} \right \} \eqs,
 \end{equation*}
with $c_U$ a constant which depends only on $U$.

\end{proposition}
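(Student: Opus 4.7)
The plan is to reduce this sub-exponential matrix Bernstein inequality to the standard \emph{bounded} matrix Bernstein inequality (as used in \lemmaref{lem:MatExp}) via a truncation argument, then handle the heavy-tailed remainder using the Orlicz $\psi_1$ bound on the $Z_i$'s.

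First I would fix a truncation level $M^\star>0$ to be chosen later and decompose each matrix as $Z_i = \tilde Z_i + R_i$, where $\tilde Z_i \eqdef Z_i \mathbf{1}_{\{\|Z_i\|_{\sigma,\infty}\le M^\star\}}$ and $R_i \eqdef Z_i \mathbf{1}_{\{\|Z_i\|_{\sigma,\infty}> M^\star\}}$. Since $\EE[Z_i]=0$, centering gives $\EE[\tilde Z_i] = -\EE[R_i]$, so the triangle inequality yields
\begin{equation*}
\Bigl\|\tfrac{1}{n}\sum_{i=1}^n Z_i\Bigr\|_{\sigma,\infty}
\le \Bigl\|\tfrac{1}{n}\sum_{i=1}^n (\tilde Z_i - \EE[\tilde Z_i])\Bigr\|_{\sigma,\infty}
+ \Bigl\|\tfrac{1}{n}\sum_{i=1}^n R_i\Bigr\|_{\sigma,\infty}
+ \Bigl\|\tfrac{1}{n}\sum_{i=1}^n \EE[R_i]\Bigr\|_{\sigma,\infty} .
\end{equation*}
The three terms are handled separately: matrix Bernstein for the first, a direct tail union bound for the second, and a moment computation using the Orlicz bound for the third.

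Next I would apply the standard (bounded) non-commutative Bernstein inequality, e.g.\ Tropp's theorem, to $\tilde Z_i - \EE[\tilde Z_i]$. By construction these matrices are bounded by $2M^\star$ in operator norm and, since truncation can only decrease second moments, their matrix variance is dominated by $\sigma_Z^2$. This yields, with probability at least $1-\rme^{-t}$,
\begin{equation*}
\Bigl\|\tfrac{1}{n}\sum_{i=1}^n (\tilde Z_i - \EE[\tilde Z_i])\Bigr\|_{\sigma,\infty}
\le c\Bigl( \sigma_Z\sqrt{\tfrac{t+\log d}{n}} + M^\star \tfrac{t+\log d}{n} \Bigr).
\end{equation*}
For the tail contribution, using the Orlicz-norm hypothesis one has $\PP(\|Z_i\|_{\sigma,\infty} > M^\star) \le \rme\cdot \rme^{-M^\star/U}$; choosing $M^\star$ at least a constant multiple of $U\log(U/\sigma_Z) + U(t+\log d)$ and a union bound over $i\in [n]$ make $R_i=0$ simultaneously with probability $1-\rme^{-t}$. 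Similarly, integrating the tail $\EE\|R_i\|_{\sigma,\infty}\le \int_{M^\star}^\infty \PP(\|Z_i\|>s)\,ds \le \rme U\, \rme^{-M^\star/U}$, so the deterministic correction term is absorbed into the bound.

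The main obstacle, and the place that dictates the form of the final estimate, is the \emph{choice of the truncation level}. Setting $M^\star = c_U\, U\log(U/\sigma_Z)$ balances the deterministic tail correction against the variance term $\sigma_Z\sqrt{(t+\log d)/n}$ delivered by matrix Bernstein: this is precisely the threshold at which $U\rme^{-M^\star/U}\lesssim \sigma_Z/U$, making $n\EE\|R_i\|/n$ negligible compared to the Bernstein bound. With this choice the bounded-part contribution becomes $M^\star(t+\log d)/n = c_U U\log(U/\sigma_Z)(t+\log d)/n$, which combines with the $\sigma_Z\sqrt{(t+\log d)/n}$ term to give the claimed maximum. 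A final union bound over the two high-probability events and a readjustment of the numerical constant $c_U$ (which depends only on $U$ through the Orlicz constants) completes the proof.
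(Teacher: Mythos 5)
Your truncation strategy does not establish the proposition as stated, and it is also not how the paper proceeds: the paper's proof is a two-line citation of the sub-exponential noncommutative Bernstein inequality of \cite{koltchinskii13} (Theorem 4 there), extended to rectangular matrices by self-adjoint dilation. That result is obtained by the moment method: the $\psi_1$ hypothesis is converted into a Bernstein-type moment condition of the form $\EE[Z^p]\preceq \frac{p!}{2}B^{p-2}\,\EE[Z^2]$ with $B\asymp U\log(U/\sigma_Z)$ --- this interpolation between the second moment and the sub-exponential tail is exactly where the factor $\log(U/\sigma_Z)$ comes from --- after which the matrix Bernstein inequality under moment conditions applies directly and yields a bound that is \emph{linear} in $t+\log d$ and contains no $\log n$.

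The gap in your argument is in the control of the tail part. To make all the $R_i$ vanish simultaneously you must union bound over $i\in[n]$: $\PP(\exists i:\|Z_i\|_{\sigma,\infty}>M^\star)\le n\,\rme^{1-M^\star/U}$, which forces $M^\star\gtrsim U(t+\log n)$, not $U(t+\log d)$; with the level you finally settle on, $M^\star=c_U U\log(U/\sigma_Z)$, this probability is of order $n\,\rme\,(\sigma_Z/U)^{c_U}$, which does not decrease with $t$ and grows with $n$, so the event you need does not hold with probability $1-\rme^{-t}$. Moreover, even after correcting to $M^\star\asymp U(t+\log n)+U\log(U/\sigma_Z)$, the bounded-part Bernstein bound contributes a term $M^\star(t+\log d)/n\asymp U(t+\log n)(t+\log d)/n$, which is quadratic in $t$ and carries a $\log n$ --- genuinely weaker than the claimed $U\log(U/\sigma_Z)(t+\log d)/n$. (For the way the proposition is used in the paper, with $t=\log(d)$ and $n$ assumed large, such a weaker bound could be made to suffice, but it does not prove the stated inequality.) A minor additional slip: $\EE\|R_i\|_{\sigma,\infty}=M^\star\,\PP(\|Z_i\|_{\sigma,\infty}>M^\star)+\int_{M^\star}^{\infty}\PP(\|Z_i\|_{\sigma,\infty}>s)\,ds$; you dropped the first term, though it is harmless here.
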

\begin{proof}
This result is an extension of the sub-exponential
noncommutative Bernstein inequality \cite[Theorem 4]{koltchinskii13},
to rectangular matrices by dilation, see \cite[Proposition~11]{Klopp14}
for details.
\end{proof}
\subsection{Proof of \Cref{th:th_low}}
\label{proof_low}
We start with a packing set construction, inspired by \cite{Koltchinskii_Tsybakov_Lounici11}.
Assume \wlg that $m_1\geq m_2$.
Let $\alpha \in (0,1/8)$ 
and define $\kappa \eqdef \min(1/2, \sqrt{\alpha m_1 r}/(2 \gamma \sigup^2 \sqrt{n})$
and the set of matrices
\begin{equation*}
\mathcal{L} \, =\left \{ L = (l_{ij})\in\RR^{m_1\times r}:
l_{ij}\in\left \{0, \kappa\gamma\right \}\,, \forall i \in [m_1],\, \forall j \in [r] \right \}.
\end{equation*}
Consider the associated set of block matrices
$$
\mathcal{L}' \ =\ \Big\{
L'=(\begin{array}{c|c|c|c}L&\cdots&L&O
\end{array})\in\matset{m_1}{m_2}: L\in \mathcal{L}\Big\},
$$
where $O$ denotes the $m_1\times (m_2-r\lfloor m_2/r \rfloor )$ zero
matrix, and $\lfloor x \rfloor$ is the integer part of $x$.
 The Varshamov-Gilbert bound (\cite[Lemma 2.9]{Tsybakov09}) guarantees the existence 
 of a subset $\mathcal{A}\subset\mathcal{L}'$ with
cardinality $\mathrm{Card}(\mathcal{A}) \geq 2^{(rm_1)/8}+1$ 
containing the null matrix $X^0$ and such that, for any two
distinct elements $\mat{X^1}$ and $\mat{X^2}$ of $\mathcal{A}$,
\begin{equation} \label{lower_2}
\Arrowvert \mat{X^1}-\mat{X^2}\Arrowvert_{2}^2  \geq \frac{m_1r\,\kappa^2\gamma^{2}}{8}
\left\lfloor \frac{m_2}{r}\right\rfloor \geq
\frac{m_1m_2\,\kappa^2\gamma^{2}}{16}\,.
\end{equation}
By construction, any element of $\mathcal{A}$ as well
as the difference of any two elements of $\mathcal{A}$ has rank at most $r$, the entries of any matrix in
$\mathcal{A}$ take values in $[0,\gamma]$ and thus
$\mathcal{A}\subset {\mathcal F}(r,\gamma)$.
For some $\mat{X} \in \AA$, we now estimate  the Kullback-Leibler
divergence $\KLd{\PP_{\mat{X}}\!}{\PP_{\mat{X^0}}\!}$ between probability measures $\PP_{\mat{X^0}}$
and $\PP_{\mat{X}}$. By independence of the observations $(Y_i,\omega_i)_{i=1}^n$ and 
since the distribution of $Y_i|\omega_i$ belongs to the exponential family one obtains
\begin{equation*}
 \KLd{\PP_{\mat{X}}}{\PP_{\mat{X}^0}}
 =n\EE_{\omega_1} \left[ G'(X_{\omega_1})(X_{\omega_1}-X^0_{\omega_1})
 -G(X_{\omega_1})+G(X^0_{\omega_1})\right] \eqsp.
\end{equation*}
Since $\mat{X^0_{\omega_1}}= 0$ and
either $\mat{X_{\omega_1}}=0$ or $\mat{X_{\omega_1}}=\kappa\gamma$,
by strong convexity and by definition of $\kappa$ one gets
\begin{equation*} 
\KLd{\PP_{\mat{X}}}{\PP_{\mat{X}^0}}
\leq n \frac{\sigup^2}{2} \kappa^2 \gamma^2 
\leq \frac{\alpha r m_1}{8} \leq \alpha \log_2(\mathrm{Card}(\mathcal{A})-1) \eqs,
\end{equation*}
which implies
\begin{equation}\label{eq: condition C}
\frac{1}{\mathrm{Card}(\AA)-1} \sum_{\mat{X} \in \AA}\KLd{\PP_{\mat{X^0}}}{\PP_{\mat{X}}}\ \leq\ \alpha \log \big(\mathrm{Card}(\AA)-1\big) \eqsp.
\end{equation}
Using \eqref{lower_2}, \eqref{eq: condition C} and \cite[Theorem 2.5]{Tsybakov09} together gives
\begin{equation*}
\inf_{\mat{\hat{X}}}
\sup_{\substack{\mat{\tX}\in\,{\cal F}( r,\gamma)
}}
\PP_{\mat{\tX}}\left (\dfrac{\Vert \mat{\hat{X}}-\mat{\tX}\Vert_2^{2}}{m_1m_2} 
> \tilde{c}  \min\left \{\gamma^{2}, \dfrac{\alpha M r}{n\,\sigup^2}\right \} \right )\ \geq\ \delta(\alpha,M) \eqsp,
\end{equation*}
where
\begin{equation}
\label{eq;definition-delta}
\delta(\alpha,M)=\frac{1}{1+2^{-rM/16}}\left(1-2\alpha-\frac{1}{2} \sqrt{\frac{\alpha}{rM\log(2)}} \right) \eqsp,
\end{equation}
and $\tilde{c}$ is a numerical constant.
Since we are free to choose $\alpha$ as small as possible, this achieves the proof.

\bibliography{references_all}
  \appendix
\section{Proof of \lemmaref{lem:algebre} and \lemmaref{lem:algebre2}}
\label{ap:lemma_algebre}
\textbf{\lemmaref{lem:algebre}}\\
 \begin{proof}
If $A, B \in \rset^{m_1 \times m_2}$ are two matrices satisfying $\mathcal{S}_i(A) \perp \mathcal{S}_i(B)$, $i=1,2$,
(see Definition \eqref{eq:def_projec})
then $\| A + B \|_{\sigma,1}= \| A \|_{\sigma,1} + \| B \|_{\sigma,1}$. Applying this identity with $A = X$ and $B= \Proj_{\mat{X}}^\bot(\mat{\tilde{X}})$, we obtain
 \[
 \|X + \Proj_{\mat{X}}^\bot(\mat{\tilde{X}}) \|_{\sigma,1}
 = \|X\|_{\sigma,1} +  \|\Proj_{\mat{X}}^\bot(\mat{\tilde{X}}) \|_{\sigma,1}\eqs,
 \]
 showing~\eqref{PenRel}.

 From the definition of $\Proj_{\mat{X}}(\cdot)$,
 $\Proj_{\mat{X}}(\mat{\tilde{X}})=
 P_{\mathcal{S}_1(\mat{X})}\mat{\tilde{X}}
 P_{\mathcal{S}^\bot_2(\mat{X})}+\mat{\tilde{X}}P_{\mathcal{S}_2(\mat{X})}$ holds
 and therefore $\rank(\Proj_{\mat{X}}(\tilde{X})) \leq 2 \rank(X)$.
 On the other hand, 
 the Cauchy-Schwarz inequality implies that for any matrix $A$,  
 $\|A\|_{\sigma,1} \leq \sqrt{ \rank(A)} \|C\|_{\sigma,2}$. 
 Consequently  \eqref{ProjRel} follows from
 \begin{align*}
 \| \Proj_{\mat{X}}(\mat{\tilde{X}}) \|_{\sigma,1} &\leq \sqrt{2 \rank(X)} \|\Proj_{\mat{X}}(\mat{\tilde{X}})\|_{\sigma,2}\eqs \leq \sqrt{2 \rank(X)} \|\mat{\tilde{X}}\|_{\sigma,2}\eqs \eqsp.
 \end{align*}
 Finally, since $\mat{\tilde{X}}=\mat{X} + \Proj_{\mat{X}}^\bot(\mat{\tilde{X}}-\mat{X})+\Proj_{\mat{X}}(\mat{\tilde{X}}-\mat{X})$ we have
 \begin{align*}
  \|\mat{\tilde{X}} \|_{\sigma,1} &\geq \|\mat{X} + \Proj_{\mat{X}}^\bot(\mat{\tilde{X}}-\mat{X}) \|_{\sigma,1} -\|\Proj_{\mat{X}}(\mat{\tilde{X}}-\mat{X}) \|_{\sigma,1} \eqs, \\
  &= \|\mat{X} \|_{\sigma,1} + \| \Proj_{\mat{X}}^\bot(\mat{\tilde{X}}-\mat{X}) \|_{\sigma,1} -\|\Proj_{\mat{X}}(\mat{\tilde{X}}-\mat{X}) \|_{\sigma,1} \eqs,
 \end{align*}
 leading to \eqref{diffschatten}.
 \end{proof}
 \textbf{\lemmaref{lem:algebre2}}\\
 \begin{proof}
Since $\Obj(\mat{X}) \leq \Obj(\mat{\tilde{X}})$, we have
 \begin{equation*}
 \Lik(\mat{\tilde{X}}) -\Lik(\mat{X})  \geq \lambda 
 (\|\mat{X} \|_{\sigma,1}-\|\mat{\tilde{X}} \|_{\sigma,1}).
 \end{equation*}
For any $\mat{X} \in \matset{m_1}{ m_2}$, 
using $\mat{X}=\mat{\tilde{X}}+\Proj_{\mat{\tilde{X}}}^\bot(\mat{X}-\mat{\tilde{X}})
+\Proj_{\mat{\tilde{X}}}(\mat{X}-\mat{\tilde{X}})$, \lemmaref{lem:algebre}\speq\eqref{PenRel}
and the triangular inequality, we get
\begin{equation*}
 \|\mat{X}\|_{\sigma,1} \geq \|\mat{\tilde{X}}\|_{\sigma,1} 
 + \|\Proj_{\mat{\tilde{X}}}^\bot(\mat{X}-\mat{\tilde{X}})\|_{\sigma,1}
 -\|\Proj_{\mat{\tilde{X}}}(\mat{X}-\mat{\tilde{X}})\|_{\sigma,1} \eqs,
\end{equation*}
which implies
\begin{equation}
 \label{LemProj:lb}
 \Lik(\mat{\tilde{X}}) -\Lik(\mat{X})  \geq  \lambda
   \left( \|\Proj_{\mat{\tilde{X}}}^\bot(\mat{X}-\mat{\tilde{X}})\|_{\sigma,1}
  -\|\Proj_{\mat{\tilde{X}}}(\mat{X}-\mat{\tilde{X}})\|_{\sigma,1} \right)\eqs.
\end{equation}
Furthermore by convexity of $\Lik$ we have
\begin{align*}
 \Lik(\mat{\tilde{X}}) -\Lik(\mat{X})  \leq  \pscal{\nabla \Lik( \mat{\tilde{X}}) }
 {\mat{\tilde{X}}-\mat{X}} \eqs,
\end{align*}
which yields by duality
\begin{align}
 \Lik(\mat{\tilde{X}}) -\Lik(\mat{X}) &\leq \| \nabla \Lik( \mat{\tilde{X}}) \|_{\sigma,\infty}
  \|\mat{\tilde{X}} -\mat{X} \|_{\sigma,1}
 \leq \frac{\lambda}{2}  \|\mat{\tilde{X}}-\mat{X} \|_{\sigma,1} \eqs, \nonumber \\
& \leq \frac{\lambda}{2}
(\|\Proj_{\mat{\tilde{X}}}^\bot(\mat{X}-\mat{\tilde{X}})\|_{\sigma,1}
+ \|\Proj_{\mat{\tilde{X}}}(\mat{X}-\mat{\tilde{X}})\|_{\sigma,1})\eqs, \label{LemProj:ub}
\end{align}
where we used $\lambda> \| \nabla \Lik( \mat{\tilde{X}}) \|_{\sigma,\infty}$
in the second line. Then combining \eqref{LemProj:lb} with \eqref{LemProj:ub} gives \eqref{projrel}.
Since 
$\mat{X}-\mat{\tilde{X}}=\Proj_{\mat{\tilde{X}}}^\bot(\mat{X}-\mat{\tilde{X}})
+\Proj_{\mat{\tilde{X}}}(\mat{X}-\mat{\tilde{X}})$,
using the triangular inequality and \eqref{projrel} yields
\begin{equation}
\label{LemProj:int}
 \|\mat{X}-\mat{\tilde{X}}|_{\sigma,1} \leq
 4 \|\Proj_{\mat{\tilde{X}}}(\mat{X}-\mat{\tilde{X}})\|_{\sigma,1}.
\end{equation}
Combining \eqref{LemProj:int} and  \lemmaref{lem:algebre}\speq\eqref{projrel} leads to \eqref{NucFob}.
\end{proof}

\section{Proof of \lemmaref{DevUnifCont}}
\label{ap:cont_sto}
\begin{proof}
The proof is adapted from \cite[Theorem~1]{Negahban_Wainwright12} and \cite[Lemma~12]{Klopp14}.
We use a peeling argument combined with a sharp deviation inequality detailed in \Cref{SliceCont}.
For any $\alpha>1$, $\beta>0$ and $0<\eta<1/2\alpha$,
define 
\begin{equation}\label{eq:def_eps}
 \epsilon(r,\alpha,\eta) \eqdef\frac{4 }{1/(2\alpha)-\eta}(\EE \|\Sigma_R \|_{\sigma,\infty})^2r\eqs,
\end{equation}
and consider the events
\begin{multline*}
 \mathcal{B} \eqdef \bigg\{ \exists \mat{X} \in \rscset{\beta}{r}\bigg |
  \left|\empf{X}{\tX}- \EE\left[\empf{X}{\tX}\right] \right|
 > \frac{\EE\left[\empf{X}{\tX}\right]}{2}+\epsilon(r,\alpha,\eta) \bigg\}\eqs,
\end{multline*}
and
\begin{equation*}
 \mathcal{R}_l\eqdef \left\{ \mat{X} \in \rscset{\beta}{r}| \alpha^{l-1}\beta 
 < \EE\left[\empf{X}{\tX}\right]
 < \alpha^{l}\beta \right\}\eqs.
\end{equation*}
Let us also define the set
\begin{equation*}
  \rscsett{\beta}{r}{t} \eqdef \left\{ \mat{X} \in  \rscset{\beta}{r}|\:\: 
  \EE\left[\empf{X}{\tX}\right] \leq t \right\} \eqs,
\end{equation*}
and
\begin{equation}\label{eq:def_Zt}
 Z_t \eqdef \sup_{\mat{X} \in  \rscsett{\beta}{r}{t} }
 |\empf{X}{\tX}- \EE\left[\empf{X}{\tX}\right] | \eqs.
\end{equation}
Then for any $\mat{X} \in  \mathcal{B} \cap \mathcal{R}_l$ we have
\begin{equation*}
 |\empf{X}{\tX}- \EE\left[\empf{X}{\tX}\right]  | 
 > \frac{1}{2}\alpha^{l-1}\beta + \epsilon(r,\alpha,\eta)\eqs,
\end{equation*}
Moreover by definition of $\mathcal{R}_l$, $\mat{X} \in \mathcal{C}_\beta (r,\alpha^{l}\beta)$.
Therefore

\begin{equation*}
 \mathcal{B} \cap \mathcal{R}_l \subset \mathcal{B}_l \eqdef \{ Z_{\alpha^{l}\beta} > \frac{1}{2\alpha}\alpha^{l}\beta+\epsilon(r,\alpha,\eta) \}\eqs,
\end{equation*}
If we now apply a union bound argument combined to \lemmaref{SliceCont} we get
 \begin{equation*}
 \PP(\mathcal{B}) \leq \sum_{l=1}^{+\infty} \PP(\mathcal{B}_l)\leq 
 \sum_{l=1}^{+\infty} \exp\left(-\frac{n\eta^2(\alpha^{l}\beta)^2}{8\gamma^4}\right)
 \leq \frac{\exp(-\frac{n\eta^2\log(\alpha)\beta^2}{4\gamma^4})}{1-\exp(-\frac{n\eta^2\log(\alpha)\beta^2}{4\gamma^4})}\eqs,
 \end{equation*}
 where we used $x\leq \rme^x$ in the second inequality. Choosing
 $\alpha = \rme$, $\eta= (4 \rme)^{-1}$ 
and $\beta$ as stated in the Lemma yields the result.
\end{proof}

\begin{lemma}
 \label{SliceCont}
Let $\alpha>1$ and $0<\eta<\frac{1}{2\alpha}$. Then we have
\begin{equation}
\label{eq:SliceCont}
 \PP\left(Z_t > t/(2\alpha) + \epsilon(r,\alpha,\eta) \right) \leq \exp\left(-n\eta^2t^2/(8\gamma^4)\right)\eqs,
\end{equation}
where $\epsilon(r,\alpha,\eta)$ and $Z_t$
are defined in \eqref{eq:def_eps} and \eqref{eq:def_Zt}.
 \end{lemma}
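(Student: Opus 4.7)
The plan is to combine McDiarmid's bounded-differences inequality with a symmetrization and contraction argument, in the spirit of \cite{Negahban_Wainwright12}. First I observe that $Z_t$ (defined in \eqref{eq:def_Zt}) depends on the independent samples $(\omega_i)_{i=1}^n$ only through the empirical averages $n^{-1}\sum_i (X_{\omega_i}-\tX_{\omega_i})^2$. Since $\|X\|_\infty,\|\tX\|_\infty\leq\gamma$, each summand lies in $[0,4\gamma^2]$, so changing a single index $\omega_i$ shifts $Z_t$ by at most $4\gamma^2/n$ uniformly in $X$. McDiarmid's inequality then yields
\begin{equation*}
 \PP\left(Z_t > \EE[Z_t] + \eta t\right) \leq \exp\left(-\frac{n\eta^2 t^2}{8\gamma^4}\right),
\end{equation*}
which already matches the exponential rate in~\eqref{eq:SliceCont}. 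It then suffices to prove the deterministic bound $\EE[Z_t] \leq (1/(2\alpha) - \eta)\, t + \epsilon(r,\alpha,\eta)$.

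To control the expectation I would apply standard symmetrization to replace the centred empirical process by a Rademacher process with signs $(\varepsilon_i)$, then invoke the Ledoux--Talagrand contraction principle for the map $u \mapsto u^2$, which is $4\gamma$-Lipschitz on the effective range $[-2\gamma,2\gamma]$ of $X_{\omega_i} - \tX_{\omega_i}$. This reduces the problem to controlling
\begin{equation*}
 \EE\sup_{X \in \rscsett{\beta}{r}{t}} \left|\Pscal{\Sigma_R}{X - \tX}\right|,
\end{equation*}
with $\Sigma_R$ as in~\eqref{eq:def:sigma_rademacher}. Schatten duality between $\|\cdot\|_{\sigma,1}$ and $\|\cdot\|_{\sigma,\infty}$, combined with the defining constraint $\|X - \tX\|_{\sigma,1} \leq \sqrt{r\,\EE[\empf{X}{\tX}]} \leq \sqrt{rt}$ valid on $\rscsett{\beta}{r}{t}$, then delivers a bound of the form $\EE[Z_t] \leq C\gamma\,\EE\|\Sigma_R\|_{\sigma,\infty}\sqrt{rt}$ for a universal constant $C$.

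The final step is the elementary inequality $2ab \leq \mu a^2 + b^2/\mu$ applied with $a = \sqrt{t}$, $b$ proportional to $\gamma\,\EE\|\Sigma_R\|_{\sigma,\infty}\sqrt{r}$, and the tuning $\mu = 2(1/(2\alpha) - \eta)$. The assumption $0 < \eta < 1/(2\alpha)$ is exactly what makes $\mu > 0$; the resulting linear-in-$t$ term has coefficient $1/(2\alpha) - \eta$, which together with the $\eta t$ from McDiarmid recombines to $t/(2\alpha)$, while the residual is of the required form $(\EE\|\Sigma_R\|_{\sigma,\infty})^2 r / (1/(2\alpha) - \eta)$, matching $\epsilon(r,\alpha,\eta)$ up to a universal multiplicative constant.

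The main technical difficulty I anticipate is the bookkeeping of numerical constants: one has to track carefully the factor $2$ arising from symmetrization, the Lipschitz constant $4\gamma$ from the contraction step, and the AM--GM tuning parameter $\mu$, and check that the resulting residual is tight enough for the geometric sum in the peeling argument of the enclosing proof of \Cref{DevUnifCont} to converge to a probability of the stated order $1/(d-1)$. The rest of the argument is standard.
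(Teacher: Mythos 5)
Your proposal follows essentially the same route as the paper: a bounded-differences concentration bound for $Z_t$ around its mean (the paper invokes Massart's inequality where you use McDiarmid's, but with summands in $[0,4\gamma^2]$ both yield the identical tail $\exp(-n\eta^2t^2/(8\gamma^4))$), followed by symmetrization, the Ledoux--Talagrand contraction principle, Schatten duality against the constraint $\|X-\tX\|_{\sigma,1}\leq\sqrt{rt}$, and the AM--GM tuning that converts $\EE\|\Sigma_R\|_{\sigma,\infty}\sqrt{rt}$ into $(1/(2\alpha)-\eta)t$ plus the residual $\epsilon(r,\alpha,\eta)$. The only caveat is the constant bookkeeping you already flag, in particular the Lipschitz factor of order $\gamma$ from the contraction step, which must be tracked to land exactly on the stated $\epsilon(r,\alpha,\eta)$.
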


 \begin{proof}
From Massart's inequality (\cite[Theorem 9]{Massart00})
we get for  $0<\eta<1/(2\alpha)$
\begin{equation}
\label{proof:MassartConc}
 \PP(Z_t> \EE[Z_t]+ \eta t)\leq \exp\left(-\eta^2nt^2/(8\gamma^4) \right)\eqs.
\end{equation}
A symmetrization argument gives
\begin{equation*}
 \EE[Z_t]\leq 2 \EE\left[\sup_{\mat{X} \in \rscsett{\beta}{r}{t}} \left|\frac{1}{n}\sum_{i=1}^{n}
 \varepsilon_i ( X_i - \tX_i)^2 \right|\right]\eqs,
\end{equation*}
where $\bvarepsilon \eqdef (\varepsilon_i)_{1 \leq i\leq n}$ is a Rademacher 
sequence independent from $(Y_i,\omega_i)_{i=1}^n$. 
The contraction principle  (\cite[Theorem 4.12]{Ledoux_Talagrand91}) yields
\begin{equation*}
 \EE[Z_t]\leq 4 \EE\left[\sup_{\mat{X} \in \rscsett{\beta}{r}{t}} \left|\frac{1}{n}\sum_{i=1}^{n}
 \varepsilon_i ( X_i - \tX_i) \right|\right]
 = 4 \EE\left[\sup_{\mat{X} \in \rscsett{\beta}{r}{t}} \left| \pscal{\Sigma_R}{X-\tX} \right| \right] \eqs,
\end{equation*}
where $\Sigma_R$ is defined in \eqref{eq:def:sigma_rademacher}.
Applying the duality inequality and then plugging into \eqref{proof:MassartConc} gives
\begin{equation*}
 \PP(Z_t> 4  \EE [\|\Sigma_R \|_{\sigma,\infty}] \sqrt{rt}+\gamma^2 \eta t)
 \leq \exp\left(-\eta^2nt^2/(8\gamma^4)\right)\eqs.
\end{equation*}
Since for any $a,b \in \RR  $ and $c>0$, $ab \leq (a^2/c +cb^2)/2$,
the proof is concluded by noting that, 
\begin{equation*}
4  \EE [\|\Sigma_R \|_{\sigma,\infty}] \sqrt{rt}
\leq \frac{1}{1/(2\alpha)-\eta}4\EE [\|\Sigma_R \|_{\sigma,\infty}]^2r+(1/(2\alpha)-\eta)t\eqs.
\end{equation*}
 \end{proof}

 \section{Proof of Oracle inequalities and Bounds for Completion with known sampling}
 \subsection{Proof of \Cref{th:oracle_breg} }
\label{proof:oracle}
\begin{proof}
The proof is an extension (to the exponential family case) of the one proposed in
\cite[Theorem 1]{Koltchinskii_Tsybakov_Lounici11}.
For ease of notation, let us define $\gradpi \eqdef \nabla \Likpi(\tX)$
and the set $\Gamma \eqdef \{X \in \matset{m_1}{m_2}| \: \|X\|_{\infty}\leq \gamma \}$.
In view of \remarkref{rem:exp_mom}, one obtains
\begin{equation*}
  \gradpi =   \frac{\sum_{i=1}^n Y_iE_i}{n}-\nabla\Gexp^\Pi(\tX)
  =\frac{\sum_{i=1}^n (Y_iE_i - \EE[Y_iE_i])}{n} \eqs.
\end{equation*}

From the definition of $\estpi$, for any $X \in \Gamma$,
\begin{equation*}
 \Gexp^\Pi(\estpi)  - \frac{\sum_{i=1}^n \estpi_i Y_i}{n} \leq
 \Gexp^\Pi(X)  - \frac{\sum_{i=1}^n X_i Y_i}{n} + \lambda( \|X\|_{\sigma,1}-\|\estpi\|_{\sigma,1})
\end{equation*}
or equivalently
\begin{multline*}
 \Gexp^\Pi(\estpi)  - \Gexp^\Pi(\tX) -\pscal{\nabla\Gexp^\Pi(\tX)}{\estpi-\tX}  \\
 \leq
 \Gexp^\Pi(X) - \Gexp^\Pi(\tX) -\pscal{\nabla\Gexp^\Pi(\tX)}{X-\tX} 
 + \pscal{\gradpi}{\estpi-X} + \lambda( \|X\|_{\sigma,1}-\|\estpi\|_{\sigma,1})
\end{multline*}
Applying \lemmaref{lem:algebre} \eqref{ProjRel},\eqref{diffschatten} and duality yields
\begin{equation*}
 \Bregpi{\estpi}{\tX}-\Bregpi{X}{\tX} \leq \lambda (\|\estpi-X\|_{\sigma,1} +\|X\|_{\sigma,1}-\|\estpi\|_{\sigma,1})
 \leq 2 \lambda \|X\|_{\sigma,1} \eqs.
\end{equation*}
where we used the assumption $\lambda \geq \|\gradpi\|_{\sigma,\infty}$. This proves \eqref{eq:oracle_basique}.

For \eqref{eq:oracle_car}, by definition
\begin{equation*}
 \estpi= \argmin_{X \in  \matset{m_1}{m2}} F(X) 
 \eqdef   \Gexp^\Pi(X) - \frac{\sum_{i=1}^n X_i Y_i}{n}  
 + \lambda \|X\|_{\sigma,1} + \indic_\Gamma(X) \eqs,
\end{equation*}
where $\indic_\Gamma$ is the indicatrice function of the bounded closed convex set $\Gamma$ \ie
$\indic_\Gamma(x)=0$ if $x\in \Gamma$ and $\indic_\Gamma(x)= + \infty$ otherwise.
Since $F$ is convex,
$\estpi$ satisfies $0 \in \partial F(\estpi)$ with $\partial F$ denoting the subdifferential
of $F$. It is easily checked that the subdifferential $\partial \indic_\Gamma(\estpi)$
is the normal cone of $\Gamma$ at the point $\estpi$. 
Hence, $0 \in \partial F(\estpi)$ implies that
there exists $\check{V} \in \partial \| \estpi \|_{\sigma,1}$
such that
for any $X \in \Gamma$,
\begin{equation*}
 \pscal{\nabla \Gexp^\Pi(\estpi)}{\estpi-X} - \Pscal{\frac{\sum_{i=1}^n Y_iE_i}{n}}{\estpi-X}
 + \lambda \pscal{\check{V}}{\estpi-X} \leq 0 \eqs,
\end{equation*}
or equivalently
\begin{equation*}
 \pscal{\nabla \Gexp^\Pi(\estpi)-\nabla \Gexp^\Pi(\tX)}{\estpi-X} 
 + \lambda \pscal{\check{V}}{\estpi-X} \leq \pscal{\gradpi}{\estpi-X} \eqs.
\end{equation*}
For any $\tilde{x}, \bar{x}, x \in \RR$, from the Bregman
divergence definition it holds
\begin{equation}
\label{eq:magic}
 (\gexp(\check{x})-\gexp(\bar{x}))(\check{x}-x)= \Breg{G}{x}{\check{x}} + \Breg{G}{\check{x}}{\bar{x}} 
 -\Breg{G}{x}{\bar{x}} \eqs.
\end{equation}
In addition, for any $V \in \partial \|X\|_{\sigma,1}$, the subdifferential 
monotonicity  yields $\pscal{\check{V}-V}{\estpi-X} \geq 0$. Therefore
\begin{equation}
\label{eq:oracle_1}
 \Bregpi{X}{\estpi} + \Bregpi{\estpi}{\tX}-\Bregpi{X}{\tX} 
  \leq \pscal{\gradpi}{\estpi-X}  - \lambda \pscal{V}{\estpi-X} \eqs.
\end{equation}
In \cite{Watson92}, it is
shown that:
\begin{equation}
\label{eq:sub_nuc}
 \partial \|X\|_{\sigma,1}= \{ \sum_{i=1}^{r} u_iv_i^\top + \Proj_{\mat{X}}^\bot W \:\:
 | W \in \matset{m_1}{m_2}, \: \|W\|_{\sigma,\infty} \leq 1 \} \eqs,
\end{equation}
where $r \eqdef \rank(X)$, $u_i$ (\resp $v_i$) are the left (\resp right) singular
vectors of $X$ and $\Proj_{\mat{X}}^\bot$ is defined in \eqref{eq:def_projec}.
Denote by $ \mathcal{S}_1$ (\resp $\mathcal{S}_2$) the space of the left (\resp right) singular vectors
of $X$.
For $W \in \matset{m_1}{m_2}$,
\begin{equation*}
 \Pscal{\sum_{i=1}^{r} u_iv_i^\top + \Proj_{\mat{X}}^\bot W}{\estpi-X}
 =\Pscal{\sum_{i=1}^{r} u_iv_i^\top}{ P_{\mathcal{S}_1}(\estpi-X) P_{\mathcal{S}_1}}
 + \Pscal{W}{\Proj_{\mat{X}}^\bot ( \estpi) } \eqs ,
\end{equation*}
and $W$ can be chosen such that $
\pscal{W}{\Proj_{\mat{X}}^\bot ( \estpi) }=\|\Proj_{\mat{X}}^\bot ( \estpi)\|_{\sigma,1}$ and 
$\|W\|_{\sigma,\infty} \leq 1$. Taking $V \in \partial \|X\|_{\sigma,1}$ 
associated to this choice of $W$ (in the sense of \eqref{eq:sub_nuc})
 and $\|\sum_{i=1}^{r} u_iv_i^\top\|_{\sigma,\infty}=1$ yield
 \begin{multline}
 \label{eq:proof:oracle:ineq}
  \Bregpi{X}{\estpi} + \Bregpi{\estpi}{\tX}-\Bregpi{X}{\tX} 
 +\lambda \|\Proj_{\mat{X}}^\bot ( \estpi)\|_{\sigma,1} \\
 \leq \pscal{\gradpi}{\estpi-X}  + \|P_{\mathcal{S}_1}(\estpi-X) P_{\mathcal{S}_1}\|_{\sigma,1} \eqs. 
 \end{multline}
The first right hand side term can be upper bounded as follows
\begin{multline}
 \label{eq:proof:oracle:ineq2}
 \pscal{\gradpi}{\estpi-X} = \pscal{\gradpi}{\Proj_{\mat{X}}(\estpi-X)} +
 \pscal{\gradpi}{\Proj_{\mat{X}}^\bot(\estpi)} \\
 \leq \| \gradpi \|_{\sigma,\infty}( \sqrt{2 \rank(X)} \|\estpi-X\|_{\sigma,2}
 + \|\Proj_{\mat{X}}^\bot ( \estpi)\|_{\sigma,1}) \eqs,
\end{multline}
where duality and \lemmaref{lem:algebre}\speq\eqref{ProjRel} are used for the inequality.
Since $\rank(P_{\mathcal{S}_1}(\estpi-X) P_{\mathcal{S}_1})\leq \rank(X)$,
the second term satisfies
\begin{equation}
 \label{eq:proof:oracle:ineq3}
 \|P_{\mathcal{S}_1}(\estpi-X) P_{\mathcal{S}_1}\|_{\sigma,1} \leq \sqrt{\rank{X}} \|\estpi-X\|_{\sigma,2}\eqs.
\end{equation}
Using $\lambda \geq \|\gradpi\|_{\sigma, \infty}$, \eqref{eq:proof:oracle:ineq}, 
\eqref{eq:proof:oracle:ineq2} and \eqref{eq:proof:oracle:ineq3} gives
 \begin{multline}
 \label{eq:oracle_fin}
  \Bregpi{X}{\estpi} + \Bregpi{\estpi}{\tX} 
 +(\lambda-\|\gradpi\|_{\sigma, \infty}) \|\Proj_{\mat{X}}^\bot ( \estpi)\|_{\sigma,1} \\
 \leq \Bregpi{X}{\tX} + \lambda (1 + \sqrt{2})\sqrt{\rank(X)} \|\estpi-X\|_{\sigma,2}\eqs. 
 \end{multline}
 By H$\ref{A0}$ and H$\ref{A1}$, $\|\estpi-X\|_{\sigma,2} \leq \siglo^{-1}\sqrt{2m_1m_2\mu\Bregpi{X}{\estpi}}$,
 hence
 \begin{multline}
\Bregpi{\estpi}{\tX} 
+(\lambda-\|\gradpi\|_{\sigma, \infty}) \|\Proj_{\mat{X}}^\bot ( \estpi)\|_{\sigma,1} \\
 \leq \Bregpi{X}{\tX} + (\frac{1 + \sqrt{2}}{2})^2 \siglo^{-2} m_1m_2\mu\lambda^2 \rank(X) \eqs,
 \end{multline}
proving \eqref{eq:oracle_car}.
\end{proof}
 
 \subsection{proof of \Cref{oracle:prob_up}}
 \label{subsec:proof_th_oracle_up}
\begin{proof}
 By the triangle inequality,
 \begin{equation}
 \label{eq:decomposition}
  \| \gradpi \|_{\sigma,\infty} \leq \left\| \frac{\sum_{i=1}^n (Y_i-\gexp(X_i)E_i}{n} \right\|_{\sigma,\infty}  +
  \left\| \frac{\sum_{i=1}^n \gexp(X_i)E_i}{n} - \EE[\gexp(X_1)E_1 \right\|_{\sigma,\infty} \eqs,
 \end{equation}
holds. As seen in the proof of \Cref{th:th_bis} (in \Cref{proof_bis}),
the first term of the right hand side satisfies \eqref{control:score} with probability
at least $1-d^{-1}$. If we define $Z_i=\gexp(X_i)E_i-\EE[\gexp(X_1)E_1]$,
then $\EE[Z_i]=0$ gives $\| Z_i \|_{\sigma,\infty} \leq 2 L_\gamma$,
with $L_\gamma$ defined in \eqref{eq:def_lgamma}.
A similar argument to the one used to derive \Cref{control:ezi} yields
\begin{equation*}
\norm{\EE\left[   \mat{Z_{i}}^\top\mat{Z_{i}} \right]}_{\sigma,\infty}
\leq \|\EE[ (\gexp(X_i)E_i)(\gexp(X_i)E_i)^\top] \|_{\sigma,\infty}
 \leq L^2_\gamma  \frac{1}{m} \eqs,
\end{equation*}
and the same bound holds for $\EE[Z_iZ_i^\top]$. Therefore, the uniform
version of the noncommutative Bernstein inequality (\propositionref{prop:bernstein})
ensures that with probability at least $1-d^{-1}$
\begin{equation}
\label{control:delta2}
   \left\| \frac{\sum_{i=1}^n \gexp(X_i)E_i}{n} - \EE[\gexp(X_1)E_1 \right\|_{\sigma,\infty}
   \leq c^* \max \left( \frac{L_\gamma}{\sqrt{m}} \sqrt{\frac{2\log(d)}{n}} ,
   4 L_\gamma \frac{\log(d)}{3n} \right).
\end{equation}
Combining \eqref{control:score}, \eqref{control:delta2} with the assumption made on $n$
in \Cref{oracle:prob_up}, achieves the proof.
\end{proof}

 \begin{proposition}
\label{prop:bernstein}
 Consider a finite sequence of independent random matrices 
 $(\Bern{Z_{i}})_{1 \leq i \leq n}\in \RR^{m_1 \times m_2}$ satisfying $\EE[\Bern{Z_{i}}]=0$ 
 and for some $U>0$,
 $\| \Bern{Z_{i}} \|_{\sigma,\infty} \leq U$ for all $i= 1, \dots, n$. Then for any $t>0$
 \begin{equation*}
 \PP\left( \left\|\frac{1}{n} \sum_{i=1}^{n} \Bern{Z_{i}}  \right\|_{\sigma,\infty}
 > t \right) \leq d\exp \left(- \frac{nt^2/2}{\sigma^2_Z+Ut/3} \right) \eqs,
 \end{equation*}
where $d=m_1+m_2$ and
\begin{equation*}
 \sigma^2_Z \eqdef \max \left\{\left\| \frac{1}{n} \sum_{i=1}^{n} 
 \EE[  \Bern{Z_{i}}\Bern{Z_{i}^\top} ] \right\|_{\sigma,\infty},
 \left\| \frac{1}{n}\sum_{i=1}^{n} \EE[ \Bern{Z_{i}}^\top\Bern{Z_{i}} ]\right\|_{\sigma,\infty}\right\} \eqs.
\end{equation*}
 In particular it implies that with at least probability $1-\rme^{-t}$
 \begin{equation*}
  \norm{\frac{1}{n} \sum_{i=1}^{n} \Bern{Z_{i}}}_{\sigma,\infty} \leq c^* \max \left\{ \sigma_Z \sqrt{ \frac{t + \log(d)}{n}} ,  \frac{U(t + \log(d))}{3n} \right \} \eqs,
 \end{equation*}
with $c^*=1+\sqrt{3}$.
\end{proposition}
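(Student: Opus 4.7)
The plan is to reduce the rectangular-matrix problem to the Hermitian case via the standard Paulsen dilation and then invoke Tropp's noncommutative Bernstein inequality for Hermitian random matrices.

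Concretely, I would introduce the self-adjoint dilation
\[
\tilde{Z}_i \eqdef \begin{pmatrix} 0 & Z_i \\ Z_i^\top & 0 \end{pmatrix} \in \RR^{d\times d}, \quad d \eqdef m_1+m_2,
\]
which is centered and Hermitian with $\|\tilde{Z}_i\|_{\sigma,\infty} = \|Z_i\|_{\sigma,\infty} \le U$. A direct block computation yields
\[
\tilde{Z}_i^{\,2} = \begin{pmatrix} Z_i Z_i^\top & 0 \\ 0 & Z_i^\top Z_i \end{pmatrix},
\]
so that $\bigl\|\sum_{i=1}^n \EE[\tilde{Z}_i^{\,2}]\bigr\|_{\sigma,\infty} = \max\bigl(\|\sum_i \EE[Z_i Z_i^\top]\|_{\sigma,\infty},\,\|\sum_i \EE[Z_i^\top Z_i]\|_{\sigma,\infty}\bigr) = n\sigma_Z^2$. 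Since the dilation preserves spectral norms of sums (the singular values of any rectangular matrix appear as the non-negative eigenvalues of its dilation), $\|n^{-1}\sum_i Z_i\|_{\sigma,\infty} = \|n^{-1}\sum_i \tilde{Z}_i\|_{\sigma,\infty}$. Applying Tropp's Hermitian Bernstein inequality to $\sum_i \tilde{Z}_i$ --- a $d\times d$ centered Hermitian sum with uniform bound $U$ and total variance $n\sigma_Z^2$ --- then delivers exactly the stated tail bound after rescaling by $1/n$.

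For the high-probability consequence, I would set $s' \eqdef t + \log(d)$ and look for the smallest $u \ge 0$ satisfying $d \exp(-nu^2/(2(\sigma_Z^2 + Uu/3))) \le \rme^{-t}$, equivalently $nu^2/2 \ge s'(\sigma_Z^2 + Uu/3)$. This quadratic in $u$ admits the positive root $u_* = (s'U/3 + \sqrt{(s'U/3)^2 + 2ns'\sigma_Z^2})/n$; writing $\alpha \eqdef s'U/(3n)$ and $\beta \eqdef \sigma_Z \sqrt{s'/n}$, this reads $u_* = \alpha + \sqrt{\alpha^2 + 2\beta^2}$, and one checks $u_* \le (1+\sqrt{3})\max(\alpha,\beta)$ by separating the regimes $\alpha \le \beta$ and $\alpha > \beta$, which yields the claimed constant $c^* = 1+\sqrt{3}$. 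The only genuine step here is the dilation (a classical trick); everything else is routine bookkeeping around Tropp's inequality and a scalar quadratic, so there is no significant obstacle in this argument.
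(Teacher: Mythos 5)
Your proposal is correct and follows essentially the same route as the paper: the first claim is exactly the rectangular matrix Bernstein inequality of Tropp (which the paper cites directly, the dilation being built into that reference), and the high-probability consequence is obtained by solving the same scalar quadratic and splitting into the two regimes $n\sigma_Z^2 \lessgtr (U^2/9)(t+\log d)$, yielding $c^*=1+\sqrt{3}$ exactly as in the paper's proof.
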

\begin{proof}
The first claim of the proposition is Bernstein's inequality for random matrices (see for example
\cite[Theorem 1.6]{Tropp12}).
Solving the equation (in $t$) $- \frac{nt^2/2}{\sigma^2_Z+Ut/3} + \log(d)=-v$ gives with at least probability $1-\rme^{-v}$
 \begin{equation*}
  \norm{\frac{1}{n} \sum_{i=1}^{n} \Bern{Z_{i}}}_{\sigma,\infty} \leq  \frac{1}{n} \left[\frac{U}{3}(v + \log(d))+\sqrt{\frac{U^2}{9}(v + \log(d))^2+2n\sigma_Z^2(v + \log(d))}\right]\eqs,
 \end{equation*}
 we conclude the proof by distinguishing the two cases $n\sigma_Z^2 \leq (U^2/9)(v + \log(d))$ or $n\sigma_Z^2 > (U^2/9)(v + \log(d))$.
\end{proof}

\end{document}